 \date{\today}
\theoremstyle{plain}
\newtheorem{thm}{Theorem}[section]
\newtheorem{lem}[thm]{Lemma}
\newtheorem{prop}[thm]{Proposition}
\theoremstyle{definition}
\theoremstyle{remark}
\newtheorem{rem}{Remark}[section]
\numberwithin{equation}{section}
\renewcommand{\theequation}{\thesection.\arabic{equation}}
\renewcommand{\u}{{\bf u}}
\renewcommand{\v}{{\bf v}}
\renewcommand{\H}{{\bf H}}
\renewcommand{\j}{{\bf J}}
\renewcommand{\i}{{\bf E}}
\renewcommand{\v}{{\vvvert}}
\newcommand{\dv}{{\rm div\,}}
\newcommand{\dif}{{\rm d}}
\newcommand{\cu}{{\rm curl\,}}
\newcommand{\U}{{\mathbf U}}
\newcommand{\F}{{\mathbf F}}
\newcommand{\G}{{\mathbf G}}
\newcommand{\W}{{\mathbf W}}
\newcommand{\D}{{\mathbf D}}
\newcommand{\A}{{\mathbf A}}
\newcommand{\s}{{\mathbf S}}
\newcommand{\p}{{\partial}}
\newcommand{\pa}{{\partial^\alpha_x}}
\newcommand{\ep}{{\epsilon}}
\begin{document}

\title[Euler-Maxwell system to compressible MHD equations]
 {zero  dielectric constant limit to the non-isentropic compressible
Euler-Maxwell system}

\author{Song Jiang}
\address{Institute of Applied Physics and Computational Mathematics, P.O. Box 8009, Beijing 100088, P.R. China}
 \email{jiang@iapcm.ac.cn}

\author [Fucai Li] {Fucai Li$^*$}%
\thanks{$^*$Corresponding author}
\address{Department  of Mathematics, Nanjing University, Nanjing 210093, P.R. China}
 \email{fli@nju.edu.cn}

\begin{abstract}
In this paper we investigate  the zero dielectric constant limit to the non-isentropic compressible
Euler-Maxwell system.  We justify this singular limit rigorously in the framework of
smooth solutions and obtain the non-isentropic compressible magnetohydrodynamic equations as the dielectric constant tends to zero.


 \end{abstract}

\keywords{non-isentropic compressible
Euler-Maxwell system, non-isentropic compressible
magnetohydrodynamic equations,
zero  dielectric constant limit, nonlinear energy method}

\subjclass[2000]{ 76W05, 35Q60, 35B25}

\maketitle

\renewcommand{\theequation}{\thesection.\arabic{equation}}
\setcounter{equation}{0}
\section{Introduction and Main Results} \label{S1}

The electromagnetic dynamics is governing by a  coupled PDE system describing the behavior of an electrically conducting fluid
and the electromagnetic fields.
In the absence of viscosity,  Hall effect, and heat conductivity, the system of electromagnetic dynamics can be written as (\cite{Im,EM})
\begin{align}
&\p_t \rho  +\dv(\rho\u)=0, \label{maa} \\
&\rho(\p_t\u +  \u\cdot \nabla\u)+\nabla p
  =\rho_\textrm{e}\i+\mu_0\j\times \H, \label{mab} \\
&\rho \theta(\p_tS+\u\cdot \nabla S)=(\j-\rho_\textrm{e}\u)\cdot (\i+\mu_0\u\times \H) ,\label{mac}\\
&\epsilon\p_t\i-\cu \H+\j=0, \label{mad}\\
&\p_t \H+\frac{1}{\mu_0 }\cu \i=0, \label{mae}\\
&\p_t (\rho_\textrm{e})+\dv \j=0,\label{maf}\\
&\epsilon\dv \i=\rho_\textrm{e},\quad \dv \H=0.\label{mag}
\end{align}
 Here  the unknowns $\rho,\u=(u_1,u_2,u_3)\in \mathbb{R}^3, S, \i=(E_1,E_2,E_3)\in \mathbb{R}^3,\H=(H_1,\linebreak H_2,H_3)\in \mathbb{R}^3$,
and $\rho_\textrm{e}$  denote the density, velocity,  entropy,    electric field,   magnetic field, and
  electric charge density, respectively.
The
current density $\j$ is expressed by Ohm's law, i.e.,
\begin{align}\label{ohmm}
\j-\rho_\textrm{e} \u =\sigma (\i+\mu_0\u\times \H).
\end{align}
The pressure $p$ and the  entropy $S$   satisfy the  Gibbs relation
\begin{equation}\label{gibbs}
\theta \mathrm{d}S=\mathrm{d}e +p\,\mathrm{d}\left(\frac{1}{\rho}\right),
\end{equation}
where $\theta$ and $e$ denote the temperature and  the internal energy of  the fluid.

To the authors' best knownledge, the only mathematical result on the system \eqref{maa}--\eqref{mag} was
obtained by  Kawashima~\cite{K} who established  the  global existence  of smooth solutions
in whole space $\mathbb{R}^2$ when the
  initial data are a small perturbation of some given constant state.
On the other hand, as it was pointed out in \cite{Im}, the
assumption that the electric charge density $\rho_\textrm{e}\simeq 0$ is
physically very reasonable for the study of plasmas. In this situation, we can eliminate
the terms involving $\rho_\textrm{e}$ in \eqref{maa}--\eqref{mag} and then obtain the following  non-isentropic compressible
Euler-Maxwell system:
\begin{align}
&\p_t \rho  +\dv(\rho\u)=0, \label{na} \\
&\rho(\p_t\u +  \u\cdot \nabla\u)+\nabla P
  = \mu_0\j\times \H, \label{nb} \\
&\rho \theta(\p_tS+\u\cdot \nabla S)=\j\cdot (\i+\mu_0\u\times \H) ,\label{nc}\\
&\epsilon\p_t\i-\cu \H+\j=0,\label{nd}\\
&\p_t \H+\frac{1}{\mu_0 }\cu \i=0, \quad \dv \H=0  \label{ne}
\end{align}
with
\begin{align}
   \quad \j =\sigma (\i+\mu_0\u\times \H). \label{oh}
\end{align}

Formally, if we take the  dielectric constant $\epsilon =0$ in
\eqref{nd}, i.e., the displacement current is negligible, then we
obtain $\j= \cu \H$. Thanks to \eqref{oh}, we can eliminate  the
electric field $\i$ in \eqref{nb},
\eqref{nc}   and \eqref{ne}, and finally obtain that
\begin{align}
&\p_t \rho  +\dv(\rho\u)=0, \label{nba} \\
&\rho(\p_t\u +  \u\cdot \nabla\u)+\nabla P
  =\mu_0  \cu \H\times \H, \label{nbb} \\
&\rho \theta(\p_tS+\u\cdot \nabla S) =\frac{1}{\sigma }|\cu \H|^2, \label{nbc}\\
&\partial_t \H -\cu(\u\times\H)= -\frac{1}{\sigma\mu_0}\cu (\cu\H ),\quad \dv\H=0.\label{nbd}
\end{align}
The equations \eqref{nba}--\eqref{nbd} is called  non-isentropic compressible magnetohydrodynamic equations with   infinite
Reynolds number which is used in describing some local processes in the cosmic system \cite{Hu87}.

The above formal derivation is usually referred as magnetohydrodynamic
approximation \cite{Im,EM}. In \cite{KS1,KS2}, Kawashima and
Shizuta justified this limit process rigorously to the complete magnetohydrodynamic
fluid system   in $\mathbb{R}^2$ for local and global small smooth solutions (small perturbations of some give constant state), respectively.
In \cite{JL},  we studied
the  magnetohydrodynamic
approximation  for the isentropic  electromagnetic fluid system in three-dimensional period domain  and obtained the
isentropic compressible magnetohydrodynamic
equations with explicit convergence rates. Recently, we extended the results in \cite{JL} to the  complete magnetohydrodynamic
fluid system   and obtained the  full
 compressible magnetohydrodynamic equations \cite{JL2}. We remark that the viscosities
  (including the shear and buck viscosities and heat conductivity coefficient) play  a crucial role in the proof process
 of \cite{JL2} and the inviscid case is left as an open problem there.

  The purpose of this paper is to solve this problem and give a  rigorous derivation of the
  compressible magnetohydrodynamic equations \eqref{nba}--\eqref{nbd} from the
  non-isentropic compressible Euler-Maxwell  system \eqref{na}--\eqref{oh} as the dielectric constant $\epsilon$ tends to
  zero. As in \cite{JL2}, we   consider the system \eqref{na}--\eqref{oh}
 in a periodic domain of $\mathbb{R}^3$, i.e., the torus
$\mathbb{T}^3=(\mathbb{R}/(2\pi \mathbb{Z}))^3$.

Below we take the  harmless physical constants
$\sigma$ and $\mu_0  $   to be one for simplicity of presentation.
For the system \eqref{na}--\eqref{oh}, it is more convenient to using the pressure $p$ instead of the density $\rho$ as an unknown.
 Thus  we reconsider the equations of state
as functions of $S$ and $p$, i.e., $\rho =r(S,p)$ and
$\theta=\Theta(S,p)$ for some positive smooth functions $r$ and
$\Theta$ defined for all $S$ and $p>0$, and satisfying
$\frac{\partial r(S,p)}{\partial p }>0$.  Moreover, in order to emphasize the unknowns depending  on the small parameter
$\epsilon$, we add the superscripts $\epsilon$ to the unknowns $ (p,\u,S, \i,\H)$ and rewrite the Euler-Maxwell  system  \eqref{na}--\eqref{oh} as
 \begin{align}
    & a(S^\ep ,p^\ep)(\partial_t p^\ep+\u^\ep\cdot \nabla p^\ep)+\dv \u^\ep=0,\label{nca}\\
& r(S^\ep,p^\ep)(\partial_t \u^\ep+\u^\ep\cdot \nabla \u^\ep)+\nabla p^\ep =(\i^\ep+\u^\ep\times \H^\ep)\times \H^\ep, \label{ncb}\\
 & b(S^\ep,p^\ep)(\partial_tS^\ep+\u^\ep\cdot \nabla S^\ep)=|\i^\ep+\u^\ep\times \H^\ep|^2,\label{ncc}\\
&  \epsilon\p_t\i^\ep-\cu \H^\ep + (\i^\ep+\u^\ep\times \H^\ep)=0\label{ncd}\\
&\p_t \H^\ep+\cu \i^\ep=0, \quad \dv \H^\ep=0.  \label{nce}
\end{align}
where $a(S^\ep,p^\ep)$ and  $b(S^\ep,p^\ep)$ are  defined as
\begin{align}\label{ncf}
   a(S^\epsilon,p^\epsilon)=\frac{1}{r(S^\epsilon,p^\epsilon)}\frac{\partial r(S^\epsilon,p^\epsilon)}{\partial p^\epsilon}, \quad
  b(S^\epsilon,p^\ep)= r(S^\ep,p^\ep)\Theta(S^\ep,p^\ep).
\end{align}
The system \eqref{nca}--\eqref{nce} are supplemented with initial data
\begin{align}\label{ncg}
 (p^\epsilon, \u^\epsilon, S^\epsilon, \i^\epsilon,\H^\epsilon)|_{t=0}
 =( p_0^\epsilon(x), \u_0^\epsilon(x),S_0^\epsilon(x), \i_0^\epsilon(x),\H_0^\epsilon(x)), \quad x\in \mathbb{T}^3.
\end{align}

We also rewrite the target equations \eqref{nba}--\eqref{nbd} (recall that $\mu_0\equiv\sigma\equiv 1$)  as
 \begin{align}
    & a(S^0 ,p^0)(\partial_t p^0+\u^0\cdot \nabla p^0)+\dv \u^0=0,\label{nda}\\
& r(S^0,p^0)(\partial_t \u^0+\u^0\cdot \nabla \u^0)+\nabla p^0 = \cu \H^0\times \H^0, \label{ndb}\\
 & b(S^0,p^0)(\partial_tS^0+\u^0\cdot \nabla S^0)=|\cu \H^0|^2,\label{ndc}\\
&  \p_t\H^0-\cu (\u^0\times \H^0)=- \cu\cu \H^0,\quad \dv \H^0=0.\label{ndd}
\end{align}
where $a(S^0,p^0)$ and $b(S^0,p^0)$ are  defined  through \eqref{ncf} with $(S^\epsilon,p^\epsilon)$ replaced by $(S^0,p^0)$.
The system  \eqref{nda}--\eqref{ndd} are  equipped with  initial data
\begin{align}\label{nde}
 (p^0, \u^0,S^0, \H^0)|_{t=0}
 =(p^0_0(x), \u^0_0(x), S_0^0(x), \H_0^0(x)), \quad x\in \mathbb{T}^3.
\end{align}

We remark that although the    electric field $\i^0$  does not appear in the system \eqref{nda}--\eqref{ndd}, it can be  induced  according to
the  relation
\begin{equation}\label{Ohm}
\i^0=\cu\H^0 - \u^0\times\H^0
\end{equation}
by the moving conductive flow in the magnetic field.

Before stating our main results, we recall the local existence  of smooth solutions to the problem \eqref{nda}--\eqref{nde}.
Since the system \eqref{nda}--\eqref{ndd} can be written as a symmetric hyperbolic-parabolic system, the results in \cite{VH} imply that

 \begin{prop} \label{Pa} Let $s> 7/2$ be an integer and
 assume that the initial data   $(p^0_0, \u^0_0,S^0_0,$ $ \H_0^0)$ satisfy
\begin{gather*}
 p^0_0, \u^0_0, S^0_0, \H_0^0\in H^{s+1}(\mathbb{T}^3), \ \ \dv \H^0_0 =0,\nonumber\\
    0<\bar p= \inf_{x\in \mathbb{T}^3}p^0_0(x)\leq p^0_0(x)\leq
    \bar {\bar p}= \sup_{x\in \mathbb{T}^3}p^0_0(x)<+\infty,\\
    0<\bar S= \inf_{x\in \mathbb{T}^3}S^0_0(x)\leq S^0_0(x)\leq
    \bar {\bar S}= \sup_{x\in \mathbb{T}^3}S^0_0(x)<+\infty
\end{gather*}
 for some positive constants $\bar p,\, \bar{\bar p},\, \bar S$, and $\bar{\bar S}$. Then there exist positive
 constants $T_*$\, $($the maximal time interval, $ 0<T_*\leq +\infty )$ and $\hat p, \tilde{p}, \hat S, \tilde{S} $, such that the problem
\eqref{nda}--\eqref{nde} has a unique classical solution $(p^0,\u^0,S^0,\H^0)$ satisfying $\dv  \H^0=0$ and
\begin{gather*}
   p^0, \u^0, S^0 \in  C^l([0,T_*),H^{s+1-l}(\mathbb{T}^3)), \  \H^0 \in  C^l([0,T_*),H^{s+1-2l}(\mathbb{T}^3)), \ \     l=0,1;\ \
 \\
    0<\hat p= \inf_{(x,t)\in \mathbb{T}^3 \times [0,T_*)}p^0(x,t)\leq  p^0(x,t)\leq
    {\tilde p}= \sup_{(x,t)\in \mathbb{T}^3 \times [0,T_*)}p^0(x,t)<+\infty,\\
       0<\hat S= \inf_{(x,t)\in \mathbb{T}^3 \times [0,T_*)}S^0(x,t)\leq S^0(x,t)\leq
    {\tilde S}= \sup_{(x,t)\in \mathbb{T}^3 \times [0,T_*)}S^0(x,t)<+ \infty.
\end{gather*}

 \end{prop}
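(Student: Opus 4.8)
The plan is to recast the target system \eqref{nda}--\eqref{ndd} as a symmetric hyperbolic--parabolic composite system and then invoke the abstract local existence theorem of \cite{VH}. First I would split the unknowns into a hyperbolic fluid block $(p^0,\u^0,S^0)$ and a parabolic magnetic block $\H^0$. For the fluid block, multiplying \eqref{nda}, \eqref{ndb}, \eqref{ndc} on the left by the symmetrizer $A_0(S^0,p^0)=\mathrm{diag}\,(a,\,r I_3,\,b)$ turns the first-order part into a symmetric hyperbolic operator: the only off-diagonal coupling is the pair $\dv\u^0$ (in the $p$-equation) and $\nabla p^0$ (in the $\u$-equation), which become transposes of one another after this multiplication, while the advective parts $\u^0\cdot\nabla$ sit symmetrically on the diagonal. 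Positive definiteness of $A_0$ is where the hypotheses enter: one uses $\partial_p r>0$ (hence $a>0$) together with $r>0$ and $b=r\Theta>0$, valid on the range where $p^0,S^0$ stay positive and bounded. The source terms $\cu\H^0\times\H^0$ and $|\cu\H^0|^2$ are first order in the magnetic variable and are therefore admissible lower-order coupling terms.

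Second, I would treat the magnetic block. The operator $-\cu\cu$ is only degenerately elliptic, its symbol vanishing in the direction of $\xi$, so \eqref{ndd} is not literally strongly parabolic. The standard remedy is to write $\cu\cu\,\H^0=\nabla\dv\,\H^0-\Delta\H^0$ and replace \eqref{ndd} by the genuinely parabolic equation $\partial_t\H^0-\Delta\H^0=\cu(\u^0\times\H^0)$. Taking the divergence of this modified equation gives $\partial_t(\dv\,\H^0)-\Delta(\dv\,\H^0)=\dv\,\cu(\u^0\times\H^0)=0$, so $\dv\,\H^0$ solves a homogeneous heat equation; with $\dv\,\H^0_0=0$ the uniqueness for the heat equation forces $\dv\,\H^0\equiv0$ on the whole life span. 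On this constraint manifold $-\Delta\H^0=-\cu\cu\,\H^0$, so the modified and original systems share the same solution, and the constraint $\dv\,\H^0=0$ asserted in the statement comes for free.

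Third, with the fluid block symmetric hyperbolic, the magnetic block uniformly parabolic, and all coupling terms of order at most one (hence subordinate to the second-order magnetic diffusion), the composite system meets the structural hypotheses of \cite{VH}. The theorem there yields a unique classical solution on a maximal interval $[0,T_*)$ with the stated mixed regularity: for the hyperbolic variables each time derivative costs one spatial derivative, giving $p^0,\u^0,S^0\in C^l([0,T_*);H^{s+1-l})$, while for the parabolically smoothed variable each time derivative costs two, giving $\H^0\in C^l([0,T_*);H^{s+1-2l})$ for $l=0,1$. The pointwise bounds then follow because the Sobolev embedding $H^{s+1}\hookrightarrow C^1$ (valid since $s>7/2$) makes $p^0,S^0$ continuous on $\mathbb{T}^3\times[0,T_*)$; together with the transport structure of \eqref{nda} and \eqref{ndc}, whose right-hand sides are bounded, this keeps $p^0,S^0$ inside a fixed positive compact interval, yielding $0<\hat p\le p^0\le\tilde p$ and $0<\hat S\le S^0\le\tilde S$. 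This positivity is exactly what guarantees that $a,b,r$ remain smooth, so the construction closes self-consistently.

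The main obstacle is the precise verification that the system falls under the framework of \cite{VH}: handling the degenerate operator $-\cu\cu$ through the propagated constraint $\dv\,\H^0=0$, so that one may legitimately work with $-\Delta$, and confirming that the coupling sources—$\cu\H^0\times\H^0$ and $|\cu\H^0|^2$ feeding the hyperbolic block, and $\cu(\u^0\times\H^0)$ feeding the parabolic block—are of admissible order and regularity, hence disturb neither the symmetric-hyperbolic nor the parabolic structure.
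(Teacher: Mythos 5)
Your proposal follows essentially the same route as the paper: the paper's entire justification of Proposition \ref{Pa} is the observation that \eqref{nda}--\eqref{ndd} can be written as a symmetric hyperbolic--parabolic composite system together with a citation of the Vol'pert--Hudjaev result \cite{VH}. Your write-up simply supplies the verification details the paper leaves implicit (the symmetrizer $\mathrm{diag}\,(a,\,r\mathbf{I}_3,\,b)$ for the fluid block, the replacement of $-\cu\cu\H^0$ by $\Delta \H^0$ via the propagated constraint $\dv\H^0=0$, and the admissibility of the coupling terms), and these details are correct.
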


The  main result of this paper can be stated as follows.
\begin{thm}\label{th}
Let $s>7/2$ be an integer and  $(p^0, \u^0,S^0, \H^0)$ the unique classical solution to the problem
\eqref{nda}--\eqref{nde} given in
Proposition \ref{Pa}.
 Suppose
that the initial data  $(p^\epsilon_0, \u^\epsilon_0,S_0^\epsilon, \i_0^\epsilon,
\H_0^\epsilon)$ satisfy
$$
 p^\epsilon_0, \u^\epsilon_0,S_0^\epsilon,  \i^\epsilon_0, \H^\epsilon_0\in H^{s}(\mathbb{T}^3),
 \  \inf_{x\in \mathbb{T}^3}
   p^\epsilon_0(x)>0, \      \inf_{x\in \mathbb{T}^3}
  S^\epsilon_0(x)>0, \  \dv \H^\epsilon_0 =0,
 $$
and
\begin{align}
&  \Vert (p^\epsilon_0-p^0_0, \u^\epsilon_0-\u^0_0, S^\epsilon_0-S^0_0,
\H_0^\epsilon-\H_0^0) \Vert_{H^s(\mathbb{T}^3)}\nonumber\\
&  \qquad \qquad\qquad\quad
+ \sqrt{\epsilon} \left\Vert \i^\epsilon_0- ( \cu\H^0_0
- \u^0_0\times\H^0_0 ) \right\Vert_{H^s(\mathbb{T}^3)} \leq  L_0 {\epsilon} \label{ivda}
 \end{align}
for some constant $L_0>0$. Then, for any $T_0\in (0,T_* )$,  there exist
   a constant  $L>0$, and
a sufficient small constant $\epsilon_0>0$ such that, for any $\epsilon\in
(0,\epsilon_0]$,  the problem \eqref{nca}--\eqref{ncg} has a unique smooth solution $(p^\epsilon,
\u^\epsilon, S^\epsilon, \i^\epsilon,\H^\epsilon)$ on $[0,T_0]$ enjoying
\begin{align}\label{iivda}
  &  \Vert (p^\epsilon-p^0, \u^\epsilon-\u^0, S^\epsilon-S^0,\H^\epsilon-\H^0)(t)
\Vert_{H^s(\mathbb{T}^3)} \nonumber\\
& \qquad \quad +  \sqrt{\epsilon}\left\Vert\left\{\i^\epsilon- ( \cu\H^0
- \u^0\times\H^0 )\right\}(t)\right\Vert_{H^s(\mathbb{T}^3)} \leq L {\epsilon},  \ \ t\in [0,T_0].
 \end{align}
\end{thm}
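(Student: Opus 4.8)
The plan is to regard \eqref{nca}--\eqref{nce} as a quasilinear symmetric hyperbolic system carrying a singular relaxation block in Maxwell's equations, and to control its distance from the limit profile $(p^0,\u^0,S^0,\i^0,\H^0)$ (with $\i^0=\cu\H^0-\u^0\times\H^0$ as in \eqref{Ohm}) by a uniform-in-$\epsilon$ weighted energy estimate together with a continuation argument. The system is symmetrized by the positive-definite matrix $\mathrm{diag}(a,\,rI_3,\,b,\,\epsilon I_3,\,I_3)$ acting on $(p^\ep,\u^\ep,S^\ep,\i^\ep,\H^\ep)$; this matrix degenerates in the electric-field block as $\epsilon\to0$, which is precisely why the field error must be measured with the $\sqrt\epsilon$-weight appearing in \eqref{ivda}--\eqref{iivda}. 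For each fixed $\epsilon>0$ the standard Kato theory gives a local smooth solution; the real content of the theorem is that the existence time may be taken $\ge T_0$ and the solution kept $O(\epsilon)$-close to the limit, both uniformly in $\epsilon$.

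First I would derive the error system. Set $N=p^\ep-p^0$, $\W=\u^\ep-\u^0$, $M=S^\ep-S^0$, $\F=\i^\ep-\i^0$, $\G=\H^\ep-\H^0$. The key point is that $(p^0,\u^0,S^0,\i^0,\H^0)$ solves the $\epsilon$-system \eqref{nca}--\eqref{nce} exactly, except in Amp\`ere's law \eqref{ncd}: because $\i^0$ obeys the algebraic relation $-\cu\H^0+(\i^0+\u^0\times\H^0)=0$, adjoining $\epsilon\p_t\i^0$ leaves the residual $\epsilon\p_t\i^0$. Subtracting, the field errors satisfy
\begin{align*}
&\p_t\G+\cu\F=0,\\
&\epsilon\p_t\F-\cu\G+\F+(\u^\ep\times\H^\ep-\u^0\times\H^0)=-\epsilon\p_t\i^0,
\end{align*}
while $(N,\W,M)$ solve transport equations with the same principal part as \eqref{nca}--\eqref{ncc}, coefficients frozen at $(S^\ep,p^\ep)$, plus lower-order terms linear in $(N,\W,M)$ with coefficients controlled by the limit solution, and plus the differences of the Lorentz force and Joule heating, which couple the fluid errors to $\F,\G$. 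Two structural facts drive everything: the curl coupling is skew-adjoint on $\mathbb{T}^3$, so $\langle\cu\G,\F\rangle=\langle\G,\cu\F\rangle$ and it cancels out of the energy identity; and the term $+\F$ in Amp\`ere's law is a genuine relaxation damping.

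The heart of the matter is the weighted energy estimate. For $|\alpha|\le s$ I apply $\pa$ to each error equation, pair against the symmetrizer-weighted error, integrate over $\mathbb{T}^3$, and sum; symmetrizer positivity produces an energy $\E_s(t)$ comparable to $\|(N,\W,M,\G)(t)\|_{H^s}^2+\epsilon\|\F(t)\|_{H^s}^2$. Differentiating the weighted quadratic form throws off the usual commutators in which $\p_t+\u^\ep\cdot\nabla$ falls on the symmetrizer; these are bounded by $\E_s$ times the $L^\infty$-norm of the first derivatives of $(p^\ep,\u^\ep,S^\ep)$. The nonlinear, variable-coefficient, Lorentz and Joule terms are handled by Moser-type product and commutator estimates in $H^s$, where $s>7/2$ ensures $H^{s-1}\hookrightarrow L^\infty$ and that $H^s$ is an algebra. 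The curl terms cancel as noted, and the damping contributes $+\,\|\F\|_{H^s}^2$ on the left. The one genuinely $\epsilon$-singular contribution is the source $-\epsilon\p_t\i^0$: writing $2\epsilon|\langle\pa\p_t\i^0,\pa\F\rangle|\le\|\pa\F\|_{L^2}^2+\epsilon^2\|\pa\p_t\i^0\|_{L^2}^2$, the first piece is swallowed by the damping and the second is $O(\epsilon^2)$ once the limit profile is known to be regular enough, using the smoothing of the parabolic equation \eqref{ndd} for $\H^0$ together with the extra derivative built into Proposition \ref{Pa}. Collecting terms yields
\[
\frac{d}{dt}\E_s(t)+\|\F(t)\|_{H^s}^2\le C_0\,\E_s(t)+C_0\,\E_s(t)^{3/2}+C_0\,\epsilon^2,
\]
with $C_0$ depending only on $T_0$ and on $\sup_{[0,T_0]}\|(p^0,\u^0,S^0,\H^0)\|_{H^{s+1}}$.

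Finally I would close the argument by a bootstrap. By \eqref{ivda} the initial energy satisfies $\E_s(0)\le CL_0^2\epsilon^2$. Let $T^\ep\le T_0$ be the largest time on which $\E_s(t)\le L^2\epsilon^2$; on $[0,T^\ep]$ the errors are small, so the full solution stays bounded in $H^s$ (hence extends) and the cubic term obeys $\E_s^{3/2}\le L\epsilon\,\E_s$, which for $\epsilon$ small is absorbed into the linear term. Gronwall's inequality then gives $\E_s(t)\le C_1\epsilon^2$ on $[0,T_0]$, which for $L^2\ge 2C_1$ and $\epsilon$ small strictly improves the bootstrap assumption; by continuity $T^\ep=T_0$, yielding simultaneously the uniform existence on $[0,T_0]$ and the estimate \eqref{iivda} with $L=\sqrt{2C_1}$, while uniqueness follows from the same estimate applied to a difference of two solutions. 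The main obstacle, and exactly the reason the inviscid case was left open in \cite{JL2}, is the uniform-in-$\epsilon$ a priori bound in the \emph{complete absence of fluid dissipation}: there is no viscosity or heat conduction to smooth the fluid block $(N,\W,M)$, so every commutator there must be closed by the hyperbolic symmetrizer alone, and the only coercivity available is the relaxation damping $+\F$ in Amp\`ere's law combined with the skew-adjoint curl coupling. Making this single damping simultaneously control $\|\F\|_{H^s}$ and absorb the singular source $\epsilon\p_t\i^0$ at top order---without spoiling the optimal $O(\epsilon)$ rate and while the symmetrizer degenerates---is the delicate point, and it is what forces both the $\sqrt\epsilon$-weighting and the use of the additional regularity of the limit profile supplied by Proposition \ref{Pa}.
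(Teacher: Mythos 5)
Your proposal is correct in its overall architecture, and that architecture coincides with the paper's: form the error system for $(P^\epsilon,\U^\epsilon,\Phi^\epsilon,\F^\epsilon,\G^\epsilon)$, note that the limit profile solves the $\epsilon$-system up to the residual $\epsilon\p_t\i^0$ in Amp\`ere's law, symmetrize with ${\rm diag}(a,r\mathbf{I}_3,b,\epsilon \mathbf{I}_3,\mathbf{I}_3)$, exploit the skew-adjoint curl coupling and the relaxation damping $+\F^\epsilon$, and close with Gronwall plus a continuation/bootstrap argument exactly as in Proposition \ref{P31}. Where you genuinely diverge from the paper is the top-order estimate for the pressure--velocity block: you apply $\pa$ directly to the symmetrized equations and close all commutators with Moser estimates, whereas the paper deliberately avoids this and instead adapts the M\'etivier--Schochet machinery of \cite{MS01,JJL4} (Lemmas \ref{LHB}--\ref{LHD}): it splits $\|\mathcal{U}^\epsilon\|_\sigma$ into $\|\{\mathcal{L}_{\mathcal{A}}(\partial_x)\}^\sigma\mathcal{U}^\epsilon\|_0$, a vorticity norm $\|\cu\U^\epsilon\|_{\sigma-1}$, and lower-order terms, and estimates the iterated-operator quantity \eqref{puc} and the vorticity \eqref{puj} separately. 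Your more elementary route is viable precisely because in this problem the fluid block of \eqref{error1}--\eqref{error4} carries no $1/\epsilon$ and the singular Maxwell block has a \emph{constant} symmetrizer $\epsilon\mathbf{I}_3$, so no commutator ever hits the degenerate weight; the paper's decomposition is the one that survives when the acoustic part is itself singular (as in the incompressible limit of \cite{JJL4}, from which the paper imports it) and it sidesteps commutators between $\pa$ and the entropy-dependent symmetrizer, at the cost of three extra lemmas. Two caveats on your write-up. First, your differential inequality is imprecise in its power counting: the Joule-heating sources are quadratic in $\F^\epsilon$, so terms like $\|\F^\epsilon\|_s^2\|\Phi^\epsilon\|_s$ are not bounded by $\mathcal{E}_s^{3/2}$ in your $\sqrt\epsilon$-weighted energy (they carry $\epsilon^{-1}$); they must instead be absorbed into the damping integral using the bootstrap smallness $\|\F^\epsilon\|_s\leq L\sqrt{\epsilon}\le 1$, which is exactly the role of the small constants $\gamma_i$, $\eta_i$, $\kappa_i$ and the $\|\F^\epsilon\|^4_s$ bookkeeping in \eqref{L2} and \eqref{H2a}; with that reading your bootstrap closes with an $L$-independent Gronwall constant, as required. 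Second, your treatment of the residual $\epsilon\p_t\i^0$ (Cauchy--Schwarz against the damping, with the $\epsilon^2\|\pa\p_t\i^0\|^2$ piece declared $O(\epsilon^2)$) is exactly the paper's treatment in \eqref{L2M1a}, and both arguments lean on the extra regularity of the limit solution from Proposition \ref{Pa} at the very top order $|\alpha|=s$; you at least name the parabolic smoothing of \eqref{ndd} as the mechanism, so this shared fine point is not a defect of your proposal relative to the paper.
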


We shall prove   Theorem \ref{th} by  adapting the  elaborate nonlinear energy method  inspired by
\cite{JL,JL2}.  The key point of the proof is to derive the error system (see \eqref{error1}--\eqref{error4} below) and obtain
the uniform estimates in a fixed time interval independent of   $\epsilon$.
 As mentioned before,  the zero dielectric constant limit to the complete magnetohydrodynamic
fluid system were studied in \cite{JL2} where the viscosity and heat conductivity terms
in the complete electromagnetic fluid system play a crucial role in the derivation of the uniform estimates.
In our case, all diffusion terms disappear and we shall make full use of the special structural of the system \eqref{nca}--\eqref{ncd}
 to obtain the desired uniformly estimates. A direct but crucial observation is that there is a damping term
$ \i^\epsilon-\i^0 $  in the electric field equations which
control the terms involving $\i^\epsilon-\i^0$ in the momentum equations, entropy equation,
and electric filed equations. In order to obtain the desired higher order estimates to the error system, we shall
also  modify some ideas developed in \cite{MS01,JJL4} which is quite different to the   isentropic
case \cite{JL} and the viscous non-isentropic case \cite{JL2}.

\begin{rem}
  The inequality \eqref{iivda} implies that the sequences $(p^\epsilon, \u^\epsilon,S^\epsilon, \H^\epsilon)$
  converge strongly to $(p^0,\u^0,S^0,\H^0)$ in $L^\infty(0,T; H^{s}(\mathbb{T}^3))$ and
  $\i^\epsilon$ converges strongly to $\i^0$ in $L^\infty(0,T; H^{s}(\mathbb{T}^3))$   but with different convergence rates, where
$\i^0$ is defined by \eqref{Ohm}.
\end{rem}

\begin{rem}
  For the local existence of solutions $(p^0,\u^0,S^0,\H^0)$ to the problem \eqref{nda}--\eqref{nde}, the
  assumption on the regularity of initial data $(p^0_0,\u^0_0,\theta^0_0,\H^0_0)$ belongs to
  $H^s(\mathbb{T}^3)$, $s>7/2$, is enough. Here we have added more regularity assumption in Proposition \ref{Pa} to
  obtain more regular solutions which are needed in the proof of Theorem \ref{th}.
  The higher regularity assumption on the target equations can provide
  a simpler arguments in this paper. To investigate the singular limit, another way is to obtain higher order
  uniform estimates directly, see, for example, \cite{MS01} on zero Mach number limit
  to non-isentropic Euler equations.
\end{rem}

\begin{rem}
In this paper we just consider the  periodic domain case, it is more interesting to study the same problem in a spatial domain with boundary
which will be our future study. We remark that in this case the boundary must be analyzed very carefully,
the interested reader can refer \cite{A,B1,B2,B,S,Sc}, and  among others on the zero Mach number limit of the compressible Euler equations,
and \cite{Rub} on singular limits of zero Alfv\'{e}n number for the equations of magneto-fluid dynamics.
In \cite{B1,B2}, some new pioneering ideas are introduced, which can be applied to the convergence study of
the singular limit in the data space, see \cite{B1,Rub,B} for the details.
\end{rem}


\begin{rem}
 We point out that the zero  dielectric constant limit is a singular limit
 and similar to the  zero Mach number limit in some sense, see \cite{A,B,JJL3,JJL4,JJLX,KM1,MS01,S,Sc} and
 the references cited therein.
\end{rem}

\begin{rem}
  It is obvious that if we let $\sigma\rightarrow \infty$ in \eqref{nba}--\eqref{nbd}, we will obtain formally
  the well-known ideal non-isentropic magnetohydrodynamic equations. It is interesting to
  establish this limit rigorously.
\end{rem}

\medskip
Before ending this introduction, we give some notations and recall some basic facts which
will be frequently used throughout this paper.

(1) We denote by $\langle \cdot,\cdot\rangle$ the standard inner product in $L^2(\mathbb{T}^3)$
with $\langle f,f\rangle=\|f\|^2$, by
$H^k$ the standard Sobolev space $W^{k,2}$ with $\|\cdot\|_{k}$ being
the corresponding norm ($\|\cdot\|_{0}\equiv\|\cdot\|)$.  The notation $\|(A_1,A_2, \dots,
 A_k)\|$ means the summation of $\|A_i\|,i=1,\dots,k$,
 and it also applies to  other norms.
For the multi-index $\alpha = (\alpha_1,  \alpha_2, \alpha_3)$,  we
denote  $\partial_x^\alpha =\partial^{\alpha_1}_{x_1}\partial^{\alpha_2}_{x_2}
\partial^{\alpha_3}_{x_3}$ and
$|\alpha|=|\alpha_1|+|\alpha_2|+|\alpha_3|$. For the integer $l$, the symbol $D^l_x$ denotes
the summation of all terms $\partial_x^\alpha$ with the multi-index $\alpha$ satisfying $|\alpha|=l$. We use $C_i$,
$\delta_i$, $K_i$, and $K$ to denote the constants which are independent of
$\epsilon$ and may change from line to line. We also omit the  spatial domain $\mathbb{T}^3$
in integrals for convenience.

(2) We shall frequently use the following Moser-type calculus
inequalities (see \cite{KM1}):

\hskip 4mm (i)\ \ For $f,g\in H^s(\mathbb{T}^3)\cap L^\infty(\mathbb{T}^3)$ and $|\alpha|\leq
s$, $s>3/2$, it holds that
\begin{align}\label{ma}
\|\partial^\alpha_x(fg)\| \leq C_s(\|f\|_{L^\infty}\|D^s_x
g\| +\|g\|_{L^\infty}\|D^s_x f\|).
\end{align}

\hskip 4mm (ii)\ \ For $f\in H^s(\mathbb{T}^3), D_x^1 f\in L^\infty(\mathbb{T}^3), g\in H^{s-1}(\mathbb{T}^3)\cap
L^\infty(\mathbb{T}^3)$ and $|\alpha|\leq s$, $s>5/2$, it holds that
\begin{align}\label{mb}
\quad  \ \ \|\partial^\alpha_x(fg)-f \partial^\alpha_xg\|\leq
C_s(\|D^1_x f\|_{L^\infty}\|D^{s-1}_x g\| +\|g\|_{L^\infty}\|D^s_xf\|). 
\end{align}

(3) Let $s> 3/2$, $f\in C^s(\mathbb{T}^3)$, and  $u\in H^s(\mathbb{T}^3)$, then for each multi-index $\alpha$, $1\leq |\alpha| \leq s$, we have
(\cite{Mo,KM1}):
\begin{align}\label{mo}
   \|\partial^\alpha_x (f(u))\| \leq C(1+\|u\|_{L^\infty}^{|\alpha|-1})\|u\|_{|\alpha|};
\end{align}
moreover, if $f(0)=0$, then (\cite{Ho97})
\begin{align}\label{ho}
  \|\partial^\alpha_x(f(u))\|\leq C( \|u\|_s)\|u\|_s.
\end{align}

This paper is organized as follows. In Section \ref{S2}, we utilize the primitive system \eqref{nca}--\eqref{nce} and the
target system \eqref{nda}--\eqref{ndd} to  derive an  error
system and state the local existence of  solutions to the error system.
  In Section \ref{S3} we give  \emph{a priori} energy
estimates to the error system  and present  the proof of Theorem \ref{th}.

\renewcommand{\theequation}{\thesection.\arabic{equation}}
\setcounter{equation}{0}
\section{Derivation of an error system and  local existence} \label{S2}

In this section  we first derive an error system from the original
system \eqref{nca}--\eqref{nce} and the target equations
\eqref{nda}--\eqref{ndd}. Then we state the local existence of  smooth solutions to
this error system.

Setting $P^\epsilon=p^\epsilon- p^0,   \U^\epsilon=\u^\epsilon-\u^0, \Phi^\epsilon= S^\epsilon- S^0,
\F^\epsilon=\i^\epsilon-\i^0,  \G^\epsilon=\H^\epsilon-\H^0$ and utilizing
      the system
\eqref{nca}--\eqref{nce} and the system \eqref{nda}--\eqref{ndd} with \eqref{Ohm}, we
  obtain that
 \begin{align}
 &  a(\Phi^\epsilon+S^0, P^\epsilon+p^0) \{\partial_t  P^\epsilon  +(\U^\epsilon+ \u^0)\cdot \nabla P^\epsilon\}+\dv \U^\epsilon =f_1^\epsilon, \label{error1} \\
 & r(\Phi^\epsilon+S^0, P^\epsilon+p^0) \{\partial_t\U^\epsilon  +(\U^\epsilon+\u^0)\cdot \nabla\U^\epsilon\}
 +\nabla \Phi^\epsilon=\mathbf{f}_2^\epsilon, \label{error2}\\
 & b(\Phi^\epsilon+S^0, P^\epsilon+p^0) \{\partial_t\Phi^\epsilon  +(\U^\epsilon+\u^0)\cdot \nabla\Phi^\epsilon\}=\mathbf{f}_3^\epsilon,      \label{error22}\\
  & \epsilon \partial_t \F^\epsilon - \cu \G^\epsilon
 =\mathbf{f}_4^\epsilon,  \label{error3}\\
 & \partial_t \G^\epsilon+\cu \F^\epsilon =0,\quad  \dv \G^\epsilon=0,  \label{error4}
 \end{align}
where $f_1^\epsilon$, $\mathbf{f}_2^\epsilon$, $\mathbf{f}_3^\epsilon$,  and $\mathbf{f}_4^\epsilon$ are defined as follows:
\begin{align*}
  f_1^\epsilon =& -[a(\Phi^\epsilon+S^0, P^\epsilon+p^0)-a(S^0,p^0)][\partial_t p^0 + \u^0\cdot\nabla p^0]\nonumber\\
  &   -a(\Phi^\epsilon+S^0, P^\epsilon+p^0)(\U^\epsilon\cdot \nabla p^0),   \\
 \mathbf{f}_2^\epsilon =& -[r(\Phi^\epsilon+S^0, P^\epsilon+p^0)-r(S^0,p^0)][\partial_t \u^0 +\u^0\cdot\nabla\u^0]\nonumber\\
  &  -r(\Phi^\epsilon+S^0, P^\epsilon+p^0)(\U^\epsilon\cdot \nabla\u^0)\nonumber\\
 &               - \cu \H^0\times \H^0
              +[\F^\epsilon+\u^0\times \G^\epsilon+\U^\epsilon\times \H^0]\times \H^0 \nonumber\\
  &  +[\F^\epsilon+\u^0\times \G^\epsilon+\U^\epsilon\times\H^0]\times \G^\epsilon+ (\U^\epsilon\times \G^\epsilon)\times (\G^\epsilon+\H^0),\\
 \mathbf{f}_3^\epsilon =&-[b(\Phi^\epsilon+S^0, P^\epsilon+p^0)-b(S^0,p^0)][\partial_t S^0 +\u^0\cdot\nabla S^0]\nonumber\\
  &   -b(\Phi^\epsilon+S^0, P^\epsilon+p^0)(\U^\epsilon\cdot \nabla S^0) \nonumber\\
   &  + |\F^\epsilon+\U^\epsilon\times \G^\epsilon|^2+ |\u^0\times \G^\epsilon+\U^\epsilon\times \H^0|^2\nonumber\\
    &  +  {2} (\F^\epsilon+\U^\epsilon\times \G^\epsilon)\cdot
                   [\cu \H^0+\u^0\times \G^\epsilon+\U^\epsilon\times \H^0]\nonumber\\
      &  +    {2} \cu \H^0\cdot (\u^0\times \G^\epsilon+\U^\epsilon\times \H^0),\\
     \mathbf{f}_4^\epsilon= &-  [\F^\epsilon+\U^\epsilon\times \H^0+\u^0\times \G^\epsilon]-  \U^\epsilon\times \G^\epsilon\nonumber\\
 & - {\epsilon} \partial_t \cu \H^0+\epsilon\partial_t(\u^0\times \H^0).
\end{align*}
The system \eqref{error1}--\eqref{error4} are supplemented  with  initial data
 \begin{align}\label{error5}
 &  (  P^\epsilon,\U^\epsilon,\Phi^\epsilon,\F^\epsilon,\G^\epsilon)|_{t=0}=
 (  P^\epsilon_0,\U^\epsilon_0,\Phi^\epsilon_0,\F^\epsilon_0,\G^\epsilon_0)\nonumber\\
  &  \qquad   := \big( p^\epsilon_0- p^0_0, \u_0^\epsilon-\u^0_0,  S^\epsilon_0- S^0_0,
   \i^\epsilon_0- ( \cu\H^0_0 - \u^0_0\times\H^0_0 ),\H^\epsilon_0-\H^0_0\big).
 \end{align}

 Denote
 \begin{align*}
 &\W^\epsilon=\left(\begin{array}{c}
                    P^\epsilon \\
                    \U^\epsilon\\
                    \Phi^\epsilon\\
                   \F^\epsilon \\
                    \G^\epsilon
                  \end{array}\right),
                  \ \
       \W^\epsilon_0=\left(\begin{array}{c}
                      P^\epsilon_0 \\
                    \U^\epsilon_0\\
                     \Phi^\epsilon_0\\
                     \F^\epsilon_0\\
                     \G^\epsilon_0\\
                  \end{array}\right), \ \  \s^\epsilon(\W^\epsilon)=\left(\begin{array}{c}
                   f^\epsilon_1\\
                     \mathbf{f}^\epsilon_2\\
                    \mathbf{f}^\epsilon_3\\
                    \mathbf{f}^\epsilon_4\\
                    \mathbf{0}
                    \end{array}
                    \right),\\
  & \D^\epsilon=\left(\begin{array}{cc}
                   \D^\epsilon_1 & \mathbf{0} \\
                   \mathbf{0} & \left(\begin{array}{cc}
                    \epsilon   \mathbf{I}_{3} & \mathbf{0}\\
                    \mathbf{0} &  \mathbf{I}_{3}
                    \end{array}
                    \right)
                  \end{array}\right), \\
 & \D^\epsilon_1=   \left(\begin{array}{ccc}
                    a(\Phi^\epsilon+S^0, P^\epsilon+p^0) & 0 & 0  \\
                    0 &  r(\Phi^\epsilon+S^0, P^\epsilon+p^0) \mathbf{I}_{3} & \mathbf{0}\\
                    0 & \mathbf{0} &  b(\Phi^\epsilon+S^0, P^\epsilon+p^0)
                    \end{array}\right),\\
 &   \A^\epsilon_i=\left(\begin{array}{cc}
                    \left(\begin{array}{ccc}
                    (\U^\epsilon+\u^0)_i & e_i & 0  \\
                    e^\mathrm{T}_i &  (\U^\epsilon+\u^0)_i \mathbf{I}_{3} & \mathbf{0}\\
                    0 & \mathbf{0} &  (\U^\epsilon+\u^0)_i
                    \end{array}\right)     & \mathbf{0} \\
                    \mathbf{0} &  \left(\begin{array}{cc}
                      \mathbf{0}& B_{i} \\
                    B_{i}^\mathrm{T}  & \mathbf{0}
                    \end{array}
                    \right)
                  \end{array}\right),
 \end{align*}
where
 $(e_1, e_2, e_3)$ is the canonical basis of $\mathbb{R}^3$, $\mathbf{I}_{d}$ ($d = 3,5$)
 is the $d\times d$ unit matrix, $y_i$ denotes the $i$-th component of $y\in \mathbb{ R}^3$, and
\begin{align*}
B_1 =\left(\begin{array}{ccc}
 0 & 0 & 0 \\
0 & 0 &  1\\
0 & -1 &  0
\end{array}
\right), \quad
 B_2  =\left(\begin{array}{ccc}
 0 & 0 & -1 \\
0 & 0 &  0\\
1 & 0 &  0
\end{array}
\right), \quad
B_3 =\left(\begin{array}{ccc}
 0 & 1 & 0 \\
-1 & 0 &  0\\
0 & 0 &  0
\end{array}
\right).
\end{align*}

Using these notations we can rewrite the  problem
\eqref{error1}--\eqref{error5} as
\begin{align}\label{error6}
  \left\{\begin{aligned}
&  \D^\epsilon \partial_t \W^\epsilon +\sum^{3}_{i=1}\A^\epsilon_i \W^\epsilon_{x_i}
  =\s^\epsilon(\W^\epsilon),\\
 & \W^\epsilon|_{t=0}= \W^\epsilon_0.
 \end{aligned} \right.
\end{align}
Obviously, the system  in
\eqref{error6}  is  a quasilinear symmetric
hyperbolic one. Thus, we can apply the result  of Majda \cite{M84}    to obtain   the following local existence of
smooth solutions to the problem \eqref{error6}.

\begin{prop} \label{Pb}
Let  $s>7/2 $ be an integer and $( p^0_0, \u^0_0,  S^0_0,  \H_0^0)$ satisfy the conditions in Proposition \ref{Pa}.
 Assume that the initial data $( P^\epsilon_0, \U^\epsilon_0, \Phi^\epsilon_0, \F_0^\epsilon, \G_0^\epsilon)$ satisfy
\begin{gather*}
   P^\epsilon_0, \U^\epsilon_0,\Phi^\epsilon_0, \F^\epsilon_0,  \G^\epsilon_0\in
  H^s(\mathbb{T}^3), \quad \dv \G^\epsilon_0 =0,\\
\inf_{x\in \mathbb{T}^3} P^\epsilon_0(x)>0,\quad   \inf_{x\in \mathbb{T}^3}\Phi^\epsilon_0(x)>0,\quad
\|\Phi^\epsilon_0\|_s\leq \delta,\quad
\| P^\epsilon_0\|_s\leq \delta
\end{gather*}
  for some small constant $\delta>0$.
Then there exist positive constants $T^\epsilon\,(0<T^\epsilon\leq +\infty)$ and $K$,  such that the problem
\eqref{error6} has a unique classical solution $( P^\epsilon,
\U^\epsilon, \Phi^\epsilon, \linebreak \F^\epsilon, \G^\epsilon)$ satisfying
\begin{gather*}
    P^\epsilon,
\U^\epsilon, \Phi^\epsilon, \F^\epsilon, \G^\epsilon  \in  C^l([0,T^\epsilon),H^{s-l}(\mathbb{T}^3)),\  l=0,1; \, \,  \, \,  \dv \G^\epsilon =0; \\
     \| P^\epsilon(t)\|_{s}\leq
   K\delta , \quad  \|\Phi^\epsilon(t)\|_{s}\leq
   K\delta, \ \ \  t\in [0,T^\epsilon).
  \end{gather*}

 \end{prop}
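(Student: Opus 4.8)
The plan is to recognize the Cauchy problem \eqref{error6} as a quasilinear symmetric hyperbolic system with coefficients depending on the unknown $\W^\epsilon$ and, through the fixed background $(p^0,\u^0,S^0,\H^0)$, on $(x,t)$, and then to invoke the classical local existence theory of Majda \cite{M84}. Three structural facts must be checked before the theorem applies: (i) each coefficient matrix $\A^\epsilon_i$ is symmetric; (ii) the matrix $\D^\epsilon$ is symmetric and, for each fixed $\epsilon>0$, positive definite along the solution; and (iii) the source $\s^\epsilon(\W^\epsilon)$ is a sufficiently smooth, $H^s$-valued function of $\W^\epsilon$ and of the background fields. The two remaining assertions of the proposition --- preservation of $\dv\G^\epsilon=0$ and the smallness of $P^\epsilon,\Phi^\epsilon$ --- would then be obtained by short separate arguments.

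First I would verify the symmetric hyperbolic structure. The symmetry of $\A^\epsilon_i$ is read off from its block form: the fluid block couples $P^\epsilon$ and $\U^\epsilon$ through the transposed pair $(e_i,e_i^{\mathrm T})$ and is therefore symmetric, while the electromagnetic block is symmetric because its off-diagonal pieces are $B_i$ and $B_i^{\mathrm T}$. For $\D^\epsilon$, which is block diagonal with scalar/diagonal entries $a,r,b$ acting on $(P^\epsilon,\U^\epsilon,\Phi^\epsilon)$ and the electromagnetic block $\mathrm{diag}(\epsilon\mathbf I_3,\mathbf I_3)$, positivity reduces to $a,r,b>0$ and $\epsilon>0$. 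Here $r>0$ and $\Theta>0$ by the hypotheses on the equations of state, so $a=r^{-1}\p r/\p p>0$ (using $\p r/\p p>0$) and $b=r\Theta>0$. These are evaluated at $(S^\epsilon,p^\epsilon)=(\Phi^\epsilon+S^0,P^\epsilon+p^0)$; the background bounds of Proposition \ref{Pa} together with the positivity and smallness of the initial errors keep $(S^\epsilon,p^\epsilon)$ inside a fixed compact subset of $\{p>0\}$ on which $a,r,b$ are bounded below, so that $\D^\epsilon$ is bounded below by a positive multiple of the identity (with a constant that may deteriorate as $\epsilon\to0$, which is harmless at this fixed-$\epsilon$ stage).

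Next I would treat the source term. Each of $f_1^\epsilon,\mathbf f_2^\epsilon,\mathbf f_3^\epsilon,\mathbf f_4^\epsilon$ is a finite sum of products of components of $\W^\epsilon$ with the background fields and their first derivatives, plus composite differences such as $a(\Phi^\epsilon+S^0,P^\epsilon+p^0)-a(S^0,p^0)$. Using the Moser calculus inequalities \eqref{ma}--\eqref{mb} and the composition estimates \eqref{mo}--\eqref{ho}, together with the regularity $p^0,\u^0,S^0,\H^0\in H^{s+1}$ provided by Proposition \ref{Pa}, one checks that these terms define a smooth map of $\W^\epsilon$ into $H^s$. The most delicate contribution is the inhomogeneous forcing $-\epsilon\p_t\cu\H^0+\epsilon\p_t(\u^0\times\H^0)$ appearing in $\mathbf f_4^\epsilon$, which involves a time derivative of the background magnetic field; the extra spatial regularity built into Proposition \ref{Pa} is exactly what is needed to place it in the appropriate norm. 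Granting (i)--(iii), Majda's theorem produces $T^\epsilon>0$, a constant $K$, and a unique solution $\W^\epsilon\in C^l([0,T^\epsilon),H^{s-l})$, $l=0,1$.

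Finally I would record the two supplementary conclusions. The constraint is preserved automatically: taking $\dv$ in the evolution equation of \eqref{error4} and using $\dv\cu\equiv0$ gives $\p_t(\dv\G^\epsilon)=0$, whence $\dv\G^\epsilon(t)=\dv\G^\epsilon_0=0$ on $[0,T^\epsilon)$. The bounds $\|P^\epsilon(t)\|_s\le K\delta$ and $\|\Phi^\epsilon(t)\|_s\le K\delta$ follow from the continuity of $t\mapsto\|(P^\epsilon,\Phi^\epsilon)(t)\|_s$ and the initial smallness $\|P^\epsilon_0\|_s,\|\Phi^\epsilon_0\|_s\le\delta$, after shrinking $T^\epsilon$ if necessary. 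I expect the genuine obstacles to be entirely in (ii) and (iii): keeping the state inside the region where $\D^\epsilon$ is positive definite, and assigning the correct Sobolev index to the background-derivative forcing in $\mathbf f_4^\epsilon$. Since only a fixed $\epsilon$ is involved here, no uniformity in $\epsilon$ is needed --- securing that uniformity is precisely the task of the a priori estimates in Section \ref{S3}.
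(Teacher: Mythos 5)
Your proposal is correct and follows essentially the same route as the paper: the paper simply observes that \eqref{error6} is a quasilinear symmetric hyperbolic system (for each fixed $\epsilon>0$, since $\D^\epsilon$ is then positive definite thanks to $a,r,b>0$) and invokes Majda's local existence theorem \cite{M84}, which is exactly your plan. The details you supply --- symmetry of the $\A^\epsilon_i$, positivity of $\D^\epsilon$, $H^s$-regularity of the source via the Moser-type and composition estimates, preservation of $\dv\G^\epsilon=0$ by $\dv\cu\equiv 0$, and the smallness bounds by continuity --- are precisely the verifications the paper leaves implicit.
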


 Notice that for smooth solutions, the non-isentropic Euler-Maxwell
  system \eqref{nca}--\eqref{nce} with initial data \eqref{ncg}  is  equivalent to
\eqref{error1}--\eqref{error5} or \eqref{error6} on $[0,T]$. Therefore, in order to obtain  the convergence of
 the Euler-Maxwell system \eqref{nca}--\eqref{nce} to the
 compressible magnetohydrodynamic equations \eqref{nda}--\eqref{ndd},
  we  need to establish the uniform decay estimates in some time interval $[0,T]$ with respect
  to the parameter $\epsilon$ of the solution to the error system \eqref{error1}--\eqref{error5}.
We shall   present these estimates in the next section.

\renewcommand{\theequation}{\thesection.\arabic{equation}}
\setcounter{equation}{0}
\section{Uniform energy estimates and proof of Theorem \ref{th}} \label{S3}

 In this section  we shall derive the uniform decay estimates with respect to the parameter
  $\epsilon$ of the solution to the problem \eqref{error1}--\eqref{error5} and
  justify rigorously the convergence of the non-isentropic Euler-Maxwell system \eqref{nca}--\eqref{nce} to the
   compressible magnetohydrodynamic equations \eqref{nda}--\eqref{ndd}.
Here we shall make full use of the structure of the system \eqref{error1}--\eqref{error4} and
Proposition \ref{Pb}, and  adapt  some techniques developed in \cite{JL,JL2,JJL4,MS01}.

We first establish the convergence rate of the error system \eqref{error1}--\eqref{error3} by obtaining the \emph{a priori} estimates
uniformly in $\epsilon$. For simplicity of presentation, we define
\begin{align*}
 &\|\mathcal{E}^\epsilon(t)\|^2_s\    = \|( P^\epsilon,\U^\epsilon,\Phi^\epsilon, \G^\epsilon)(t)\|^2_{s},\\
 &\v \mathcal{E}^\epsilon(t)\v ^2_s =\|\mathcal{E}^\epsilon(t)\| ^2_s+ \epsilon \Vert \F^\epsilon \Vert^2_{s},\\
&\v\mathcal{E}^\epsilon\v_{s,T}\  =\sup_{0<t<T}\v\mathcal{E}^\epsilon(t)\v_s.
\end{align*}

The crucial estimate of our paper is the following uniform-in-$\epsilon$ result on the error system
\eqref{error1}--\eqref{error4}.

\begin{prop}\label{P31}
   Let $s>7/2$ be an integer and assume that the initial data  $(  P^\epsilon_0,\U^\epsilon_0,\Phi^\epsilon_0, \F^\epsilon_0,\G^\epsilon_0)$ satisfy
\begin{align}\label{ww}
\|( P^\epsilon_0,\U^\epsilon_0,\Phi^\epsilon_0, \G^\epsilon_0) \|^2_{s}+ \epsilon\Vert
   \F^\epsilon_0 \Vert^2_{s}  =\v \mathcal{E}^\epsilon(t=0)\v _{s}^2\leq M_0{\epsilon}^2
 \end{align}
for sufficiently small $\epsilon$ and   some constant $M_0>0$   independent of $\epsilon$.
Then, for any $T_0\in (0, T_*)$, there are
  two constants $ M_1 > 0$  and $\epsilon_1 > 0$  depending only on $T_0$, such that
for all $\epsilon\in (0,\epsilon_1]$, it holds that $T^\epsilon\geq T_0$
and the solution $(  P^\epsilon, \U^\epsilon,\Phi^\epsilon, \F^\epsilon, \G^\epsilon)$ of the problem
\eqref{error1}--\eqref{error5}, well-defined in $[0, T_0]$, enjoys
 \begin{align}\label{www}
   \v \mathcal{E}^\epsilon\v _{s,T_0} \leq M_1 {\epsilon}.
 \end{align}
   \end{prop}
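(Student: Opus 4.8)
The plan is to argue by a continuation (bootstrap) argument built on the local existence statement in Proposition \ref{Pb}. The natural energy is the one adapted to the symmetric hyperbolic structure of \eqref{error6}, namely
\[
\mathcal{N}^\epsilon(t):=\sum_{|\alpha|\le s}\big\langle \D^\epsilon\,\pa\W^\epsilon,\pa\W^\epsilon\big\rangle,
\]
which, since $a,r,b$ are bounded above and below along the solution (by Proposition \ref{Pa} and the smallness of $P^\epsilon,\Phi^\epsilon$ from Proposition \ref{Pb}) and since the electric-field block of $\D^\epsilon$ is exactly $\epsilon\,\mathbf{I}_3$, is equivalent to $\v\mathcal{E}^\epsilon(t)\v_s^2=\|(P^\epsilon,\U^\epsilon,\Phi^\epsilon,\G^\epsilon)\|_s^2+\epsilon\|\F^\epsilon\|_s^2$. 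I would assume a priori that $\v\mathcal{E}^\epsilon\v_{s,T}\le M_1\epsilon$ on some interval $[0,T]\subset[0,T^\epsilon)$, use this to bound all nonlinear coefficients and to keep $\|P^\epsilon\|_s,\|\Phi^\epsilon\|_s$ small, and then close the estimate so as to recover the \emph{same} bound with a strictly better constant; a standard continuation argument then forces $T^\epsilon\ge T_0$ and yields \eqref{www}.

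The core computation is to apply $\pa$ with $|\alpha|\le s$ to \eqref{error6}, pair with $\pa\W^\epsilon$, and sum. Because each $\A_i^\epsilon$ is symmetric, the convection terms integrate by parts into the bounded term $\sum_i\langle(\p_{x_i}\A_i^\epsilon)\pa\W^\epsilon,\pa\W^\epsilon\rangle$ (up to sign), controlled by $\|(\U^\epsilon,\u^0)\|_s$; the commutators $[\pa,\A_i^\epsilon]\W^\epsilon_{x_i}$ and $[\pa,\D^\epsilon]\p_t\W^\epsilon$ are handled by the Moser inequalities \eqref{ma}--\eqref{mb}. The decisive gain comes from the damping $-\F^\epsilon$ sitting inside $\mathbf{f}_4^\epsilon$: it produces $-2\|\F^\epsilon\|_s^2$, an \emph{unweighted} good term, hence much stronger than the $\epsilon\|\F^\epsilon\|_s^2$ stored in the energy, while the curl terms $-\cu\G^\epsilon$ in \eqref{error3} and $\cu\F^\epsilon$ in \eqref{error4} cancel thanks to the skew blocks $B_i$. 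This is the structural observation flagged in the introduction.

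Next I would estimate the remaining entries of $\s^\epsilon(\W^\epsilon)$ using \eqref{ma}, \eqref{mo}, \eqref{ho} and Proposition \ref{Pa}, sorting them into three harmless types: (i) terms at least quadratic in $(P^\epsilon,\U^\epsilon,\Phi^\epsilon,\F^\epsilon,\G^\epsilon)$, which under the bootstrap carry a factor $\v\mathcal{E}^\epsilon\v_{s,T}$ and are absorbed; (ii) terms linear in the error with coefficients built from the target solution, which feed the Gronwall factor $e^{CT_0}$; and (iii) the genuinely $O(\epsilon)$ forcings. The only subtle point among the linear terms is the coupling between $\F^\epsilon$ and $(\U^\epsilon,\G^\epsilon)$ coming from $[\F^\epsilon+\u^0\times\G^\epsilon+\U^\epsilon\times\H^0]\times\H^0$ in $\mathbf{f}_2^\epsilon$ and from $-[\U^\epsilon\times\H^0+\u^0\times\G^\epsilon]$ in $\mathbf{f}_4^\epsilon$: each pairing with $\F^\epsilon$ is dominated by $\eta\|\F^\epsilon\|_s^2$ plus $C_\eta\|(\U^\epsilon,\G^\epsilon)\|_s^2$, the first being swallowed by the damping and the second going into Gronwall.

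The hard part, and the reason Proposition \ref{Pa} is stated with one extra derivative, is the $O(\epsilon)$ forcing $-\epsilon\p_t\cu\H^0+\epsilon\p_t(\u^0\times\H^0)=-\epsilon\p_t\i^0$ in $\mathbf{f}_4^\epsilon$. Pairing its top-order part with $\pa\F^\epsilon$ at $|\alpha|=s$ cannot be closed by brute force, because the parabolic equation \eqref{ndd} only gives $\p_t\H^0\in H^{s-1}$, so $\p_t\cu\H^0\in H^{s-2}$ falls two derivatives short of $H^s$; no single integration by parts repairs the pairing at the highest order. This is exactly the difficulty the techniques of \cite{MS01,JJL4} are designed to overcome, and I would follow them, working with a time-integrated and/or corrected form of the energy and exploiting the limit induction law $\cu\i^0=-\p_t\H^0$ together with $\p_t\G^\epsilon=-\cu\F^\epsilon$ to relocate the excess derivatives onto quantities already under control, the resulting boundary-in-time terms being absorbed using the data bound \eqref{ww} and the now-available $H^{s+1}$ regularity of $\H^0$. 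Making this top-order bookkeeping close cleanly with only that regularity is where I expect essentially all the technical effort to concentrate. Once it is settled, collecting the estimates yields a differential inequality of the form $\tfrac{d}{dt}\v\mathcal{E}^\epsilon\v_s^2+c\|\F^\epsilon\|_s^2\le C\v\mathcal{E}^\epsilon\v_s^2+C\epsilon^2$, and Gronwall together with \eqref{ww} gives $\v\mathcal{E}^\epsilon\v_{s,T_0}^2\le C(M_0+1)\epsilon^2 e^{CT_0}$; choosing $M_1$ and then $\epsilon_1$ small so that this is $\le M_1^2\epsilon^2$ and so that the smallness hypotheses of Proposition \ref{Pb} persist closes the bootstrap and proves \eqref{www}.
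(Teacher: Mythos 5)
Your overall strategy is the paper's: a bootstrap on $[0,T^\epsilon)$, an $\epsilon$-weighted energy equivalent to $\v\mathcal{E}^\epsilon\v_s^2$, the unweighted damping produced by $-\F^\epsilon$ in $\mathbf{f}_4^\epsilon$, the cancellation $\int\dv(\F^\epsilon\times\G^\epsilon)\,\dif x=0$, and Gronwall. Your route to the top-order estimates, however, is genuinely different: the paper never differentiates the full system \eqref{error6}. It differentiates only the entropy and Maxwell blocks (Lemma \ref{LHa}) and recovers $\|(P^\epsilon,\U^\epsilon)\|_s$ indirectly, via the div--curl inequalities \eqref{pua}--\eqref{pub}, energy estimates for the iterates $\{\mathcal{L}_{\mathcal{A}}(\partial_x)\}^\sigma\mathcal{U}^\epsilon$ (Lemma \ref{LHC}), and a vorticity estimate for $\cu\U^\epsilon$ (Lemma \ref{LHD}); that is where the machinery of \cite{MS01,JJL4} actually enters the paper. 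Your direct differentiation of \eqref{error6} is in principle viable here, and simpler, because---unlike the low Mach number setting of \cite{MS01}---the fluid block of the error system carries no $\epsilon^{-1}$ singular term with variable coefficients, so nothing obstructs commuting $\pa$ through the equations; the commutators are exactly the Moser-type terms you list.

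The genuine gap is the step you yourself single out as the crux and then leave unproven. You propose to close the pairing $\epsilon\langle\pa\p_t\cu\H^0,\pa\F^\epsilon\rangle$ at $|\alpha|=s$ by ``the techniques of \cite{MS01,JJL4}'' combined with $\cu\i^0=-\p_t\H^0$ and $\p_t\G^\epsilon=-\cu\F^\epsilon$. But those references contain no device for a low-regularity forcing of this kind (in the paper they serve the $(P^\epsilon,\U^\epsilon)$ estimates, a different difficulty), and the manipulations you sketch do not close the estimate: integrating by parts in time produces the boundary term $\epsilon\langle\pa\cu\H^0,\pa\F^\epsilon\rangle$, which under the bootstrap $\sqrt{\epsilon}\,\|\F^\epsilon\|_s\le M_1\epsilon$ is only $O(\epsilon^{3/2})$---too large for the $O(\epsilon^2)$ budget that \eqref{www} requires---while the interior term then involves either $\p_{tt}\H^0$ or $s+1$ derivatives of $\G^\epsilon$, neither of which is controlled; moving the curl onto $\F^\epsilon$ instead costs an $(s+1)$-st derivative of $\F^\epsilon$, also unavailable. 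So the decisive estimate of your argument is asserted, not established, and as written the proof does not yield \eqref{www}. For the record, the paper disposes of this term by bare Cauchy--Schwarz in \eqref{L2MMa}--\eqref{L2M1a}, absorbing $\tfrac14\|\pa\F^\epsilon\|^2$ into the damping and placing $\epsilon^2\|\pa\p_t\cu\H^0\|^2$ into the $C\epsilon^2$ bucket; this tacitly treats $\p_t\cu\H^0$ as bounded in $H^s$, which is more time-regularity of $\H^0$ than Proposition \ref{Pa} literally provides ($\p_t\H^0\in H^{s-1}$ only). Your suspicion that the bookkeeping here is delicate is therefore well founded, but flagging the difficulty is not the same as resolving it, and the repair you point to is aimed at the wrong obstacle.
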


In order to prove Proposition \ref{P31}  we first derive the  following a priori
estimates on $[0,T]$ with $T \equiv  T_\epsilon = \min\{ T_1, T^\epsilon \}$ for some given  $\hat T<1$ and any $T_1<\hat T$ independent of $\epsilon$.

\medskip
\subsection{$L^2$ estimates}
\begin{lem}\label{La}
   Under the assumptions in Proposition \ref{P31}, it holds that for  all $0<t<T $ and sufficiently small $\epsilon$,
   \begin{align}\label{L2}
     &
     \|( P^\epsilon,\U^\epsilon,\Phi^\epsilon, \G^\epsilon)(t)\|^2+\epsilon\|  \F^\epsilon(t)\| ^2 +\frac32\int^t_0 \|\F^\epsilon(\tau)\|^2\dif \tau\nonumber\\
     \leq  &
     C\Big\{ \|( P^\epsilon,\U^\epsilon,\Phi^\epsilon, \G^\epsilon)(t)\|^2+\epsilon\|  \F^\epsilon(t)\| ^2\Big\}(t=0)+ C_T\epsilon^2
\nonumber\\
    &   +\int^t_0\big\{\eta_2\|\F^\epsilon(\tau)\|^2+\eta_3\|\F^\epsilon(\tau)\|^4+\big[ \eta_1\|\F^\epsilon(\tau)\|^2_2 +C(1+\|\mathcal{E}^\epsilon(\tau)\|^2_s   \nonumber\\
&+\|\mathcal{E}^\epsilon(\tau)\|^4_s+\|\mathcal{E}^\epsilon(\tau)\|^8_s)\big]
 \| (P^\epsilon,\U^\epsilon,\Phi^\epsilon,\G^\epsilon )(\tau)\|^2\big\}\dif \tau,
       \end{align}
       where $\eta_1$, $\eta_2$, and $\eta_3$ are sufficiently small positive constants.
\end{lem}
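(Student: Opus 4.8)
The plan is to run a direct $L^2$ energy estimate on the symmetric hyperbolic system \eqref{error6}, exploiting two structural features: the skew-symmetric coupling in the hyperbolic part (which makes the leading fluid interactions cancel after integration by parts on $\mathbb{T}^3$) and, crucially in the total absence of diffusion, the zeroth-order damping $-\F^\epsilon$ hidden inside $\mathbf{f}_4^\epsilon$. Concretely, I would take the $L^2$ inner product of \eqref{error1} with $P^\epsilon$, of \eqref{error2} with $\U^\epsilon$, of \eqref{error22} with $\Phi^\epsilon$, of \eqref{error3} with $\F^\epsilon$, and of \eqref{error4} with $\G^\epsilon$, and then add the resulting five identities.

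First I would isolate the good structure. Integrating the convection operators $(\U^\epsilon+\u^0)\cdot\nabla$ by parts turns $\langle a(\U^\epsilon+\u^0)\cdot\nabla P^\epsilon,P^\epsilon\rangle$ and its analogues into remainders of the form $\tfrac12\int\dv[a(\U^\epsilon+\u^0)]\,|P^\epsilon|^2$, while the pressure–velocity coupling cancels, since $\langle\dv\U^\epsilon,P^\epsilon\rangle+\langle\nabla P^\epsilon,\U^\epsilon\rangle=0$ on $\mathbb{T}^3$ (the coupling is between $P^\epsilon$ and $\U^\epsilon$, as dictated by the off-diagonal $e_i$ in $\A_i^\epsilon$). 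The time-derivative contributions assemble into $\tfrac12\tfrac{d}{dt}\langle\D^\epsilon\W^\epsilon,\W^\epsilon\rangle$; since $a,r,b$ are smooth and, by Proposition \ref{Pb}, $\|P^\epsilon\|_s$ and $\|\Phi^\epsilon\|_s$ stay small while $p^0,S^0$ are bounded above and below, this quadratic form is equivalent to $\|(P^\epsilon,\U^\epsilon,\Phi^\epsilon,\G^\epsilon)\|^2+\epsilon\|\F^\epsilon\|^2$. For the electromagnetic block the curl terms cancel because $\langle\cu\F^\epsilon,\G^\epsilon\rangle=\langle\F^\epsilon,\cu\G^\epsilon\rangle$, and the term $-\F^\epsilon$ in $\mathbf{f}_4^\epsilon$ produces the dissipation $-\|\F^\epsilon\|^2$, which I retain on the left as the time integral $\int_0^t\|\F^\epsilon\|^2$.

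Next I would estimate the forcing. The genuinely $O(\epsilon)$ pieces $-\epsilon\p_t\cu\H^0+\epsilon\p_t(\u^0\times\H^0)$ in $\mathbf{f}_4^\epsilon$, paired with $\F^\epsilon$, give at most $\eta_2\|\F^\epsilon\|^2+C_T\epsilon^2$ after Young's inequality, using that $\H^0,\u^0$ and their time derivatives are bounded via Proposition \ref{Pa}; this is the origin of the $C_T\epsilon^2$ term. The pieces of $f_1^\epsilon,\mathbf{f}_2^\epsilon,\mathbf{f}_3^\epsilon$ that are linear in the errors — the coefficient differences $a(\cdot)-a(S^0,p^0)=O(|P^\epsilon|+|\Phi^\epsilon|)$ times bounded background factors, and transport-of-background terms such as $\U^\epsilon\cdot\nabla p^0$ — are all dominated by $C\|(P^\epsilon,\U^\epsilon,\Phi^\epsilon,\G^\epsilon)\|^2$, while the linear pieces pairing against $\F^\epsilon$ (for instance $\langle\F^\epsilon\times\H^0,\U^\epsilon\rangle$) split by Young into $\eta\|\F^\epsilon\|^2$ plus the $L^2$ energy. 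The nonlinear magnetic terms are handled with the embedding $H^s\hookrightarrow L^\infty$ and the Moser inequalities \eqref{ma}--\eqref{mb}, producing the polynomial coefficient $C(1+\|\mathcal{E}^\epsilon\|_s^2+\|\mathcal{E}^\epsilon\|_s^4+\|\mathcal{E}^\epsilon\|_s^8)$ in front of $\|(P^\epsilon,\U^\epsilon,\Phi^\epsilon,\G^\epsilon)\|^2$; integrating the energy identity in time then yields \eqref{L2}.

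The main obstacle is exactly the lack of diffusion: the only dissipation available is the zeroth-order quantity $\|\F^\epsilon\|^2$, yet it must absorb every interaction involving $\F^\epsilon$. The terms with a single factor of $\F^\epsilon$ against a quadratic error (e.g.\ $\int(\F^\epsilon\times\G^\epsilon)\cdot\U^\epsilon$ from $\mathbf{f}_2^\epsilon$) are bounded by placing $\F^\epsilon$ in $L^\infty\lesssim\|\F^\epsilon\|_2$ and applying Young, which forces the coefficient $\eta_1\|\F^\epsilon\|_2^2$ to multiply the $L^2$ energy; the genuinely quadratic-in-$\F^\epsilon$ contributions from $|\F^\epsilon+\U^\epsilon\times\G^\epsilon|^2$ in $\mathbf{f}_3^\epsilon$, estimated by Gagliardo–Nirenberg/Young, force the quartic term $\eta_3\|\F^\epsilon\|^4$. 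Neither can be closed at the $L^2$ level alone; they must be carried forward and absorbed only after the higher-order estimates (where the full $\epsilon$-weighted dissipation of $\F^\epsilon$ is available) together with the bootstrap controlling $\v\mathcal{E}^\epsilon\v_{s,T}$. Keeping all constants independent of the singular factor $\epsilon$ in front of $\p_t\F^\epsilon$ is what makes this bookkeeping delicate.
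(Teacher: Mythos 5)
Your proposal follows essentially the same route as the paper's proof: weighted $L^2$ pairings of each error equation with its own unknown, cancellation of the pressure--velocity coupling and of the curl terms via $\int \dv(\F^\epsilon\times\G^\epsilon)\,\dif x=0$, the damping $-\F^\epsilon$ inside $\mathbf{f}_4^\epsilon$ supplying the dissipation $\tfrac34\int_0^t\|\F^\epsilon\|^2\dif\tau$, and Young/Sobolev/Moser bounds producing exactly the $\eta_i$-weighted terms, the $C_T\epsilon^2$, and the polynomial coefficients in $\|\mathcal{E}^\epsilon\|_s$. The one point to tighten: when you say the time-derivative contributions ``assemble into $\tfrac12\tfrac{\dif}{\dif t}\langle\D^\epsilon\W^\epsilon,\W^\epsilon\rangle$'' you silently drop the commutator $\tfrac12\langle\partial_t\D^\epsilon\,\W^\epsilon,\W^\epsilon\rangle$, and in the paper it is precisely this term --- via the bound on $\|\partial_t(a,r,b)\|_{L^\infty}\leq\|\partial_t(a,r,b)\|_{2}$, where $\partial_t\Phi^\epsilon$ is quadratic in $\F^\epsilon$ through \eqref{error22} --- that forces the $\eta_1\|\F^\epsilon\|_2^2$ coefficient in \eqref{L2}, so it must be estimated by the same $L^\infty\lesssim H^2$ device you already use for the cubic interaction terms.
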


\begin{proof}
Multiplying \eqref{error1} by $P^\epsilon$, \eqref{error2} by $\U^\epsilon$, \eqref{error22} by $\Phi^\epsilon$,
 and integrating them over $\mathbb{T}^3$ respectively,  we obtain that
\begin{align}\label{LU2}
  &\langle a(\Phi^\epsilon+S^0, P^\epsilon+p^0) \partial_t P^\epsilon,P^\epsilon\rangle
  +\langle r(\Phi^\epsilon+S^0, P^\epsilon+p^0) \partial_t \U^\epsilon, \U^\epsilon\rangle\nonumber\\
  &  +\langle b(\Phi^\epsilon+S^0, P^\epsilon+p^0)\partial_t \Phi^\epsilon,\Phi^\epsilon\rangle
   \nonumber\\
  = 
  & -  \left \langle a(\Phi^\epsilon+S^0, P^\epsilon+p^0)(\U^\epsilon +\u^0) \cdot \nabla P^\epsilon , P^\epsilon\right\rangle\nonumber\\
 & -  \left \langle r(\Phi^\epsilon+S^0, P^\epsilon+p^0)(\U^\epsilon +\u^0) \cdot \nabla \U^\epsilon , \U^\epsilon\right\rangle \nonumber\\
   & -  \left \langle b(\Phi^\epsilon+S^0, P^\epsilon+p^0)(\U^\epsilon +\u^0) \cdot \nabla \Phi^\epsilon , \Phi^\epsilon\right\rangle\nonumber\\
  & +    \left \langle f^\epsilon_1,P^\epsilon\right\rangle +   \left \langle \mathbf{f}^\epsilon_2,\U^\epsilon\right\rangle
   +    \left \langle \mathbf{f}^\epsilon_3,\Phi^\epsilon\right\rangle .
   \end{align}

Thanks to the positivity and smoothness of $a(\Phi^\epsilon+S^0, P^\epsilon+p^0)$, $r(\Phi^\epsilon+S^0, P^\epsilon+p^0)$  and $b(\Phi^\epsilon+S^0, P^\epsilon+p^0)$,
  Proposition \ref{Pb}, the regularity of $( p^0,\u^0,S^0,\H^0)$,
 Cauchy-Schwarz's inequality, Sobolev's imbedding,  and \eqref{mo},
we get directly from \eqref{error1}, \eqref{error2}, and \eqref{error22}
 that
\begin{align}
&\|(\partial_{t}a(\Phi^\epsilon+S^0, P^\epsilon+p^0), \partial_{t}r(\Phi^\epsilon+S^0, P^\epsilon+p^0),\partial_{t}b(\Phi^\epsilon+S^0, P^\epsilon+p^0) )\|_{L^\infty}\nonumber\\
\leq&
 \|(\partial_{t}a(\Phi^\epsilon+S^0, P^\epsilon+p^0),\partial_{t}r(\Phi^\epsilon+S^0, P^\epsilon+p^0),\partial_{t}b(\Phi^\epsilon+S^0, P^\epsilon+p^0))\|_{2}\nonumber\\
 \leq &
 \eta_1\|\F^\epsilon\|^2_2+ C(\|\mathcal{E}^\epsilon(t)\|^4_s+\|\mathcal{E}^\epsilon(t)\|^2_s+1)  \label{wb}
\end{align}
for any $\eta_1>0$ and
\begin{align}
& \|(\nabla a(\Phi^\epsilon+S^0, P^\epsilon+p^0), \nabla r(\Phi^\epsilon+S^0, P^\epsilon+p^0),\nabla b(\Phi^\epsilon+S^0, P^\epsilon+p^0))\|_{L^\infty}\nonumber\\
\leq & C (1+\|\mathcal{E}^\epsilon(t)\|_s+\|\mathcal{E}^\epsilon(t)\|^2_s). \label{wc}
\end{align}
Thus, the first three terms
on the right-hand side of \eqref{LU2} can be estimated as follows:
\begin{align}
  &  \left| \langle a(\Phi^\epsilon+S^0, P^\epsilon+p^0)(\U^\epsilon +\u^0) \cdot \nabla P^\epsilon , P^\epsilon\rangle\right|\nonumber\\
 & + \left| \langle r(\Phi^\epsilon+S^0, P^\epsilon+p^0)(\U^\epsilon +\u^0) \cdot \nabla \U^\epsilon , \U^\epsilon\rangle\right|\nonumber\\
  &+ \left| \langle b(\Phi^\epsilon+S^0, P^\epsilon+p^0)(\U^\epsilon +\u^0) \cdot \nabla \Phi^\epsilon , \Phi^\epsilon\rangle\right|\nonumber\\
  \leq  & C (1+\|\mathcal{E}^\epsilon(t)\|_s+\|\mathcal{E}^\epsilon(t)\|^2_s+\|\mathcal{E}^\epsilon(t)\|^4_s)
  (\| P^\epsilon\|^2+\|\U^\epsilon\|^2+ \|\Phi^\epsilon\|^2). \label{wcc}
  \end{align}
By the definition of $f_1^\epsilon, \mathbf{f}^\epsilon_2$ and $\mathbf{f}^\epsilon_3$, the   regularity of $( p^0,\u^0,S^0,\H^0)$, and
 Cauchy-Schwarz's inequality, we have
 \begin{align}
   & \left \langle f^\epsilon_1,P^\epsilon\right\rangle +   \left \langle \mathbf{f}^\epsilon_2,\U^\epsilon\right\rangle
   +    \left \langle \mathbf{f}^\epsilon_3,\Phi^\epsilon\right\rangle\nonumber\\
   \leq &  C\epsilon^2 +\eta_2\|\F^\epsilon\|^2+\eta_3\|\F^\epsilon\|^4\nonumber\\
  & +C(1+\|\mathcal{E}^\epsilon(t)\|^2_s+\|\mathcal{E}^\epsilon(t)\|^4_s+\|\mathcal{E}^\epsilon(t)\|^8_s)
  (\| P^\epsilon\|^2+\|\U^\epsilon\|^2+ \|\Phi^\epsilon\|^2) \label{wd}
 \end{align}
for any $\eta_2>0$ and  $\eta_3>0$.

Putting \eqref{wcc} and \eqref{wd} into \eqref{LU2} and noticing \eqref{wb}, we  arrive at
\begin{align}
& \langle a(\Phi^\epsilon+S^0, P^\epsilon+p^0)  P^\epsilon,  P^\epsilon\rangle+
\langle r(\Phi^\epsilon+S^0, P^\epsilon+p^0)  \U^\epsilon,  \U^\epsilon\rangle\nonumber\\
&+ \langle b(\Phi^\epsilon+S^0, P^\epsilon+p^0)  \Phi^\epsilon,  \Phi^\epsilon\rangle\nonumber\\
\leq & \big\{ \langle a(\Phi^\epsilon+S^0, P^\epsilon+p^0)  P^\epsilon,  P^\epsilon\rangle+
\langle r(\Phi^\epsilon+S^0, P^\epsilon+p^0)  \U^\epsilon,  U^\epsilon\rangle\nonumber\\
&+ \langle b(\Phi^\epsilon+S^0, P^\epsilon+p^0)  \Phi^\epsilon,  \Phi^\epsilon\rangle\big\}\big|_{t=0}  +\int^t_0\Big\{ C\epsilon^2 +\eta_2\|\F^\epsilon\|^2+\eta_3\|\F^\epsilon\|^4    \nonumber\\
& +\big[ \eta_1\|\F^\epsilon\|^2_2+C(1+\|\mathcal{E}^\epsilon\|^2_s+\|\mathcal{E}^\epsilon\|^4_s+\|\mathcal{E}^\epsilon\|^8_s)\big]
 \| (P^\epsilon,\U^\epsilon,\Phi^\epsilon)\|^2\Big\}(\tau)\dif \tau. \label{we}
 \end{align}
Moreover, we have
\begin{align}  \label{wf}
&  \|P^\epsilon\|^2+\|\U^\epsilon\|^2+\|\Phi^\epsilon\|^2\nonumber\\
\leq&
\|(a(\Phi^\epsilon+S^0, P^\epsilon+p^0))^{-1}\|_{L^\infty}\langle a (\Phi^\epsilon+S^0, P^\epsilon+p^0)P^\epsilon,
P^\epsilon\rangle\nonumber\\
&
   + \|(r(\Phi^\epsilon+S^0, P^\epsilon+p^0))^{-1}\|_{L^\infty}\langle r(\Phi^\epsilon+S^0, P^\epsilon+p^0)  \U^\epsilon, \U^\epsilon\rangle\nonumber\\
   &+ \|(b(\Phi^\epsilon+S^0, P^\epsilon+p^0))^{-1}\|_{L^\infty}\langle b(\Phi^\epsilon+S^0, P^\epsilon+p^0)  \Phi^\epsilon, \Phi^\epsilon\rangle\nonumber\\
\leq &C_0 \big\{\langle a(\Phi^\epsilon+S^0, P^\epsilon+p^0)  P^\epsilon,  P^\epsilon\rangle+
\langle r(\Phi^\epsilon+S^0, P^\epsilon+p^0)  \U^\epsilon,  \U^\epsilon\rangle\nonumber\\
&+ \langle b(\Phi^\epsilon+S^0, P^\epsilon+p^0)  \Phi^\epsilon,  \Phi^\epsilon\rangle\big\},
\end{align}
since $a(\Phi^\epsilon+S^0, P^\epsilon+p^0)$ and $r(\Phi^\epsilon+S^0, P^\epsilon+p^0)$ are uniformly bounded away from
zero.
%
%

Multiplying \eqref{error3} by ${ }\F^\epsilon$ and \eqref{error4} by $\G^\epsilon$ respectively,
and integrating them over $\mathbb{T}^3$, we find that
\begin{align}
  &  \frac12 \frac{\dif}{\dif t}(\| \sqrt{\epsilon }\,\F^\epsilon\| ^2+\| \G^\epsilon\| ^2)
   +\int (\cu \F^\epsilon\cdot \G^\epsilon-\cu \G^\epsilon\cdot \F^\epsilon)\dif x
   + \| \F^\epsilon\| ^2
   = \left\langle \mathbf{f}_4^\epsilon ,  \F^\epsilon\right\rangle.\label{L2M}
\end{align}
By the regularity of $ (\u^0,\H^0)$, Cauchy-Schwarz's inequality, and Sobolev's imbedding,
the terms on the right-hand side of \eqref{L2M} can be bounded by
\begin{align*}
 \frac{1}{4} \| \F^\epsilon\| ^2+C(\| \mathcal{E}^\epsilon(t)\| _{s}^2+1)\| (\U^\epsilon,\G^\epsilon)\| ^2+C\epsilon^2.
\end{align*}
From the fact that
\begin{align*}
   \int (\cu \F^\epsilon\cdot \G^\epsilon-\cu \G^\epsilon\cdot \F^\epsilon)\dif x=\int \dv(\F^\epsilon\times \G^\epsilon)\dif x =0,
\end{align*}
we get
\begin{align}
&  \frac12 \frac{d}{dt}(\|\sqrt{\epsilon }\F^\epsilon\| ^2+\| \G^\epsilon\| ^2)
   +\frac{3  }{4}\| \F^\epsilon\| ^2\nonumber\\
& \qquad \qquad \qquad   \leq
  C(\| \mathcal{E}^\epsilon(t)\| _{s}^2+1)\| (\U^\epsilon,\G^\epsilon)\| ^2+C\epsilon^2. \label{L2M2}
\end{align}
Note that here we have used the special structure of \eqref{error3} and \eqref{error4}.

 Thus, we can easily obtain \eqref{L2} by integrating \eqref{L2M}  and  \eqref{L2M2} over $[0,T]$ and then combining the result with \eqref{we} and \eqref{wf}.
\end{proof}

\medskip
\subsection{Higher order estimates on $\Phi^\epsilon, \F^\epsilon,$ and $\G^\epsilon$}

In order to close the estimate \eqref{L2}, we need to derive the higher order estimates of the system  \eqref{error1}--\eqref{error4}. We first consider
the estimates on $\Phi^\epsilon, \F^\epsilon,$ and $\G^\epsilon$.

\begin{lem}\label{LHa}
 Let the assumptions in Proposition \ref{P31} hold and the multi-index $\alpha$ satisy $1\leq |\alpha|\leq s$. Then,
  for all $0<t<T $ and sufficiently small $\epsilon$, we have
   \begin{align}\label{H2a}
     &\|(\partial^\alpha_x \Phi^\epsilon,\partial^\alpha_x\G^\epsilon)\|^2 +\epsilon\|\partial^\alpha_x \F^\epsilon\|^2
   +  \frac{3}{2} \int^t_0
\| \pa\F^\epsilon(\tau)\| ^2\dif\tau\nonumber\\
     \leq  &  \big\{\|(\partial^\alpha_x \Phi^\epsilon,\partial^\alpha_x\G^\epsilon)\|^2 +\epsilon\|\partial^\alpha_x \F^\epsilon\|^2\big\} (t=0) +C_T\epsilon^2\nonumber\\
   &   + C\int^t_0\Big
     \{\gamma_1 \| \F^\epsilon\| ^4_{s} +\left(\gamma_2+\gamma_3\right)\| \F^\epsilon\| ^2_{s-1}+\|\mathcal{E}\|^{2}_s+\mathcal{E}\|^{2s}_s\nonumber\\
     & + {C}(1+\|\mathcal{E}\|^{2(s+1)}_s)
 \|(\partial^\alpha_x \Phi^\epsilon,  \partial^\alpha_xP^\epsilon, \partial^\alpha_x\U^\epsilon, \partial^\alpha_x\G^\epsilon)\|^2
 \Big\}(\tau)\dif \tau,
          \end{align}
            where  $\gamma_1$, $\gamma_2$, and $\gamma_3$ are sufficiently small positive constants.
\end{lem}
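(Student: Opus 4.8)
The plan is to run two energy estimates in parallel---one for the entropy error $\Phi^\ep$ coming from the transport equation \eqref{error22}, and one for the electromagnetic pair $(\F^\ep,\G^\ep)$ coming from \eqref{error3}--\eqref{error4}---and then add them, exactly as in the $L^2$ estimate of Lemma \ref{La} but now at the differentiated level $\pa$ with $1\le|\alpha|\le s$. The reason these three quantities are grouped together, while $\pa P^\ep$ and $\pa\U^\ep$ are left on the right-hand side, is that the entropy equation is pure transport, $b(\partial_t+(\U^\ep+\u^0)\cdot\nabla)\Phi^\ep=\mathbf f_3^\ep$, and so decouples from the pressure--velocity block at leading order; no symmetrization of the $(P^\ep,\U^\ep)$ system is needed here, that block being treated separately. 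The decisive structural feature I will exploit is that the damping term $\|\pa\F^\ep\|^2$ produced by the electromagnetic block is strong enough to absorb the top-order appearances of $\F^\ep$ that arise when differentiating the nonlinear sources $\mathbf f_3^\ep$ and $\mathbf f_4^\ep$; this is precisely why the two estimates must be combined before closing.

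For the entropy part, I would apply $\pa$ to \eqref{error22}, write it as $b\,\pa(\partial_t\Phi^\ep+(\U^\ep+\u^0)\cdot\nabla\Phi^\ep)=\pa\mathbf f_3^\ep-\mathcal C^\ep$, where $\mathcal C^\ep=[\pa,\,b]\big(\partial_t\Phi^\ep+(\U^\ep+\u^0)\cdot\nabla\Phi^\ep\big)+b\,[\pa,(\U^\ep+\u^0)\cdot\nabla]\Phi^\ep$ is a commutator controlled by the Moser estimate \eqref{mb}, and then pair with $\pa\Phi^\ep$. Integration by parts turns the transport term into $\tfrac12\int\dv\big(b(\U^\ep+\u^0)\big)|\pa\Phi^\ep|^2$, which by \eqref{mo} is bounded by a polynomial in $\|\mathcal E^\ep\|_s$ times $\|\pa\Phi^\ep\|^2$, while the time-derivative term yields $\tfrac12\frac{\dif}{\dif t}\langle b\,\pa\Phi^\ep,\pa\Phi^\ep\rangle$ minus a harmless $\partial_t b$ contribution; since $b$ is bounded above and below this controls $\|\pa\Phi^\ep\|^2$ as in \eqref{wf}.

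For the electromagnetic part, applying $\pa$ to \eqref{error3}--\eqref{error4}, pairing with $\pa\F^\ep$ and $\pa\G^\ep$ respectively and adding, the curl cross terms cancel through $\int\dv(\pa\F^\ep\times\pa\G^\ep)\,\dif x=0$ exactly as below \eqref{L2M}; the $-\F^\ep$ piece of $\mathbf f_4^\ep$ then furnishes the full damping $\|\pa\F^\ep\|^2$, which after absorbing the small contributions from the source becomes the coefficient $\tfrac32$ in \eqref{H2a} upon doubling the inequality. It remains to bound $\langle\pa\mathbf f_3^\ep,\pa\Phi^\ep\rangle$ and $\langle\pa\mathbf f_4^\ep,\pa\F^\ep\rangle$ by the Moser inequalities \eqref{ma}, \eqref{mb}, \eqref{mo}, \eqref{ho}, the uniform bounds of Proposition \ref{Pb}, and the regularity of $(p^0,\u^0,S^0,\H^0)$. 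The linear-in-error terms together with the $\ep\partial_t$ background terms in $\mathbf f_4^\ep$ give the $C_T\ep^2$ and the $(1+\|\mathcal E^\ep\|_s^{2(s+1)})\|(\pa\Phi^\ep,\pa P^\ep,\pa\U^\ep,\pa\G^\ep)\|^2$ contributions, the latter retaining $\pa P^\ep,\pa\U^\ep$ because $\mathbf f_3^\ep,\mathbf f_4^\ep$ contain $\U^\ep$ and the coefficients depend on $P^\ep$.

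The main obstacle is the treatment of the quadratic terms in $\F^\ep$, such as $|\F^\ep+\U^\ep\times\G^\ep|^2$ in $\mathbf f_3^\ep$, because $\F^\ep$ carries only the degenerate weighted bound $\ep\|\F^\ep\|_s^2$, so $\|\F^\ep\|_s$ is not a priori small. My strategy is to apply \eqref{ma} so that one factor keeps the top-order derivative $\|\pa\F^\ep\|$ and the other is put in $L^\infty$ via Sobolev embedding; pairing with $\pa\Phi^\ep$ and using Young's inequality then either sends the top-order part into the electromagnetic damping $\tfrac32\int\|\pa\F^\ep\|^2$ on the left, or leaves a residue of the form $\gamma_1\|\F^\ep\|_s^4$ (from products of two high-order $\F^\ep$ factors, using $\|\F^\ep\|_{L^\infty}\le C\|\F^\ep\|_s$) and $(\gamma_2+\gamma_3)\|\F^\ep\|_{s-1}^2$ (from mixed background--error products in which $\F^\ep$ sits below top order), with the small constants $\gamma_i$ chosen so that these can be closed later in Proposition \ref{P31}. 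Finally I would integrate in time over $[0,t]$ to obtain \eqref{H2a}.
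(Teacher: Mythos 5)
Your proposal is correct and follows essentially the same route as the paper's proof: parallel $\pa$-level energy estimates for the entropy transport equation \eqref{error22} and for the Maxwell block \eqref{error3}--\eqref{error4}, cancellation of the curl cross terms via $\int \dv(\pa\F^\ep\times\pa\G^\ep)\,\dif x=0$, and Moser-type bounds on $\pa \mathbf{f}_3^\ep$, $\pa\mathbf{f}_4^\ep$ that either push top-order occurrences of $\F^\ep$ into the damping $\|\pa\F^\ep\|^2$ or leave exactly the residues $\gamma_1\|\F^\ep\|_s^4$ and $(\gamma_2+\gamma_3)\|\F^\ep\|_{s-1}^2$. The only cosmetic difference is that the paper divides \eqref{error22} by $b(\Phi^\ep+S^0,P^\ep+p^0)$ \emph{before} applying $\pa$, so its entropy energy is the unweighted $\|\pa\Phi^\ep\|^2$ and no $\partial_t b$ term arises, whereas you keep $b$ as a weight and must control $\partial_t b$ through \eqref{wb}; this is a routine variant that closes in the same way.
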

\begin{proof}
  Dividing \eqref{error22} by $b(\Phi^\epsilon+S^0, P^\epsilon+p^0)$, applying  operator  $\partial^\alpha_x\, (1\leq|\alpha|\leq s)$ to the resulting equation,
   multiplying
    by $ \partial^\alpha_x \Phi^\epsilon$, and integrating over $\mathbb{T}^3$, we obtain that
    \begin{align}\label{zn1}
  \frac12\frac{\rm d}{{\rm d}t}\left\langle  \pa  \Phi ^\epsilon, \pa  \Phi^\epsilon \right\rangle
 = & -\left\langle \pa((\U^\epsilon+\u^0)\cdot\nabla  \Phi^\epsilon),\pa  \Phi^\epsilon\right\rangle\nonumber\\
 &+\left\langle \partial^\alpha_x\left\{\frac{\mathbf{f}^\epsilon_3}{b(\Phi^\epsilon+S^0, P^\epsilon+p^0)}\right\},\pa  \Phi^\epsilon \right\rangle.
    \end{align}
Now we bound the terms on the right-hand side of \eqref{zn1}. By the regularity of $\u^0$,
Cauchy-Schwarz's inequality, and Sobolev's imbedding, we see that
\begin{align}\label{zn2}
 & \ \ \ \ \langle \partial^\alpha_x([(\U^\epsilon+\u^0)\cdot \nabla] \Phi^\epsilon),\partial^\alpha_x  \Phi^\epsilon\rangle\nonumber\\
& = \langle [(\U^\epsilon+\u^0)\cdot \nabla]\partial^\alpha_x  \Phi^\epsilon , \partial^\alpha_x  \Phi^\epsilon\rangle\
  +\big\langle \mathcal{H}^{(1)},\partial^\alpha_x  \Phi^\epsilon \big\rangle\nonumber\\
  & = -\frac12 \langle \dv(\U^\epsilon+\u^0) \partial^\alpha_x  \Phi^\epsilon , \partial^\alpha_x  \Phi^\epsilon \rangle\
   +\big\langle \mathcal{H}^{(1)},\partial^\alpha_x  \Phi^\epsilon \big \rangle\nonumber\\
 & \leq C(\| \mathcal{E}^\epsilon(t)\| _{s}+1)\| \partial^\alpha_x  \Phi^\epsilon\| ^2   +\|  \mathcal{H}^{(1)}\| ^2,
\end{align}
where the commutator
\begin{align*}
 \mathcal{H}^{(1)}  :=\partial^\alpha_x([(\U^\epsilon+\u^0)\cdot \nabla] \Phi^\epsilon)-[(\U^\epsilon+\u^0)\cdot \nabla]\partial^\alpha_x  \Phi^\epsilon .
\end{align*}
We use the Moser-type and Cauchy-Schwarz's inequalities,  the regularity of $\u^0$, and Sobolev's imbedding to infer that
\begin{align}\label{znc}
  \big\|\mathcal{H}^{(1)} \big\| &\leq C( \| D_x^1(\U^\epsilon+\u^0)\| _{L^\infty}\| D_x^s \Phi^\epsilon\|
+\| D_x^1 \Phi^\epsilon\| _{L^\infty}\| D^{s-1}_x(\U^\epsilon+\u^0)\|) \nonumber\\
   & \leq C\| \mathcal{E}^\epsilon(t)\| _{s}^2+C\| \mathcal{E}^\epsilon(t)\| _{s}.
\end{align}

By the definiton of $\mathbf{f}^\epsilon_3$,  the last term in \eqref{zn1} can be rewritten as
 \begin{align}\label{HT1}
  &\left\langle \partial^\alpha_x\left\{\frac{\mathbf{f}^\epsilon_3}{b(\Phi^\epsilon+S^0, P^\epsilon+p^0)}\right\},\pa  \Phi^\epsilon \right\rangle\nonumber\\
  =  &   -\left\langle \partial^\alpha_x\left\{\frac{[b(\Phi^\epsilon+S^0, P^\epsilon+p^0)-b(S^0,p^0)][\partial_t S^0 +\u^0\cdot\nabla S^0]}{b(\Phi^\epsilon+S^0, P^\epsilon+p^0)}
    \right\},\pa\Phi^\epsilon\right\rangle\nonumber\\
   &  -\left\langle \partial^\alpha_x (\U^\epsilon\cdot \nabla S^0) ,\pa\Phi^\epsilon\right\rangle
      +\left\langle\pa\left\{ \frac{|\F^\epsilon+\U^\epsilon\times \G^\epsilon|^2}{b(\Phi^\epsilon+S^0, P^\epsilon+p^0)}\right\},\pa\Phi^\epsilon\right\rangle\nonumber\\
    & +\left\langle\pa\left\{ \frac{|\u^0\times \G^\epsilon+\U^\epsilon\times \H^0|^2}{b(\Phi^\epsilon+S^0, P^\epsilon+p^0)}\right\},\pa\Phi^\epsilon\right\rangle\nonumber\\
    &  + \left\langle\pa\left\{\frac{2\F^\epsilon}{b(\Phi^\epsilon+S^0, P^\epsilon+p^0)}\cdot
                   [\cu \H^0+\u^0\times \G^\epsilon+\U^\epsilon\times \H^0]\right\},\pa\Phi^\epsilon\right\rangle\nonumber\\
    &  + \left\langle\pa\left\{\frac{2(\U^\epsilon\times \G^\epsilon)}{b(\Phi^\epsilon+S^0, P^\epsilon+p^0)}\cdot
                   [\cu \H^0+\u^0\times \G^\epsilon+\U^\epsilon\times \H^0]\right\},\pa\Phi^\epsilon\right\rangle\nonumber\\
      &  +   \left\langle\pa\left\{\frac{2}{b(\Phi^\epsilon+S^0, P^\epsilon+p^0)}\cu \H^0\cdot (\u^0\times \G^\epsilon+\U^\epsilon\times \H^0)\right\},\pa\Phi^\epsilon\right\rangle\nonumber\\
     : = & \sum^{7}_{i=1}\mathcal{I}^{(i)}.
\end{align}
We have to bound the terms on the right-hand side of \eqref{HT1}.
 By  the regularity of $ ( S^0,p^0,\u^0)$, the positivity of $b(\Phi^\epsilon+S^0, P^\epsilon+p^0)$, \eqref{ho},  and Cauchy-Schwarz's inequality, the
 term $\mathcal{I}^{(1)}$   can be bounded as follows
\begin{align}\label{ht8}
   \big|\mathcal{I}^{(1)} \big| \leq  C(\|\mathcal{E}^\epsilon(t)\|_{s}^{2s} +1)
   ( \|\partial^\alpha_x \Phi^\epsilon\|^{2} +\|\partial^\alpha_x P^\epsilon\|^2).
\end{align}
Similarly, the    term  $\mathcal{I}^{(2)}$  can be controlled by
\begin{align}\label{ht88}
  \big| \mathcal{I}^{(2)} \big|  \leq  C(\|\U^\epsilon(t)\|_{s}^2   +   \|\partial^\alpha_x\Phi^\epsilon\|^2).
\end{align}
For the  term $\mathcal{I}^{(3)}$, we rewrite it as
\begin{align*}
\mathcal{I}^{(3)} &=  \left\langle\pa\left\{ \frac{1}{b(\Phi^\epsilon+S^0, P^\epsilon+p^0)}|\F^\epsilon+\U^\epsilon\times \G^\epsilon|^2\right\},\pa\Phi^\epsilon\right\rangle\nonumber\\
 &=  \left\langle\pa\left\{ \frac{1}{b(\Phi^\epsilon+S^0, P^\epsilon+p^0)}|\F^\epsilon|^2\right\},\pa\Phi^\epsilon\right\rangle\nonumber\\
  &\ \ \ \ +  \left\langle\pa\left\{ \frac{2}{b(\Phi^\epsilon+S^0, P^\epsilon+p^0)}\F^\epsilon\cdot(\U^\epsilon\times \G^\epsilon)\right\},\pa\Phi^\epsilon\right\rangle\nonumber\\
   &\ \ \ \  +  \left\langle\pa\left\{ \frac{1}{b(\Phi^\epsilon+S^0, P^\epsilon+p^0)} |\U^\epsilon\times \G^\epsilon|^2\right\},\pa\Phi^\epsilon\right\rangle\nonumber\\
    &: =\mathcal{I}^{(3_1)}+\mathcal{I}^{(3_2)}+\mathcal{I}^{(3_3)}.
      \end{align*}
By Cauchy-Schwarz's inequality and Sobolev's embedding, the term $\mathcal{I}^{(3_1)}$ can be bounded by
\begin{align}\label{ht111}
 \mathcal{I}^{(3_1)} =& \left\langle \frac{1}{b(\Phi^\epsilon+S^0, P^\epsilon+p^0)}\pa\left(|\F^\epsilon|^2\right),\pa\Phi^\epsilon\right\rangle\nonumber\\
 & +\sum_{\beta \leq \alpha,|\beta|<|\alpha|}
 \left\langle\partial_x^{\alpha-\beta}\left( \frac{1}{b(\Phi^\epsilon+S^0, P^\epsilon+p^0)}\right)\partial_x^{\beta}(|\F^\epsilon|^2),\pa\Phi^\epsilon\right\rangle\nonumber\\
 \leq & \gamma_1 \|\F^\epsilon\|^4_s+C_{\gamma_1}\|\pa\Phi^\epsilon\|^2(1+\|\mathcal{E}(t)\|^{2(s+1)}_s)
\end{align}
for any $\gamma_1>0$. For  the term $\mathcal{I}^{(3_2)}$, by the positivity of $b(\Phi^\epsilon+S^0, P^\epsilon+p^0)$ and
Sobolev's imbedding,   we have
\begin{align}\label{ht12}
   \mathcal{I}^{(3_2)}& = 2\left\langle\partial^\alpha_x \F^\epsilon\cdot
\frac{\U^\epsilon\times\G^\epsilon }{b(\Phi^\epsilon+S^0, P^\epsilon+p^0)},
   \partial^\alpha_x\Phi^\epsilon \right\rangle
+  2\big\langle \mathcal{H}^{(2)},
\partial^\alpha_x\Phi^\epsilon
\big\rangle \nonumber\\
 & \leq \frac{1}{16} \| \partial^\alpha_x \F^\epsilon\| ^2+
 C\| \mathcal{E}^\epsilon(t)\| _{s}^2
 \| \partial^\alpha_x\U^\epsilon\| ^2+2 \big\langle \mathcal{H}^{(2)},  \partial^\alpha_x\Phi^\epsilon
\big\rangle,
\end{align}
where the commutator
\begin{align*}
 \mathcal{H}^{(2)}:=\partial^\alpha_x\left\{
\F^\epsilon\cdot
\frac{\U^\epsilon\times\G^\epsilon }{b(\Phi^\epsilon+S^0, P^\epsilon+p^0)}\right\}-\partial^\alpha_x \F^\epsilon\cdot
\frac{\U^\epsilon\times\G^\epsilon }{b(\Phi^\epsilon+S^0, P^\epsilon+p^0)}.
\end{align*}
By the Cauchy-Schwarz's and Moser-type  inequalities, we obtain that
 \begin{align}\label{ht13}
 &  \ \ \   2\big|\big\langle\mathcal{H}^{(2)},
  \partial^\alpha_x\Phi^\epsilon\big\rangle\big| \leq  2\| \mathcal{H}^{(2)}\| \cdot \| \partial^\alpha_x\Phi^\epsilon \|  \nonumber\\
  & \leq C \bigg[\left\Vert D_x^1\left(\frac{\U^\epsilon\times\G^\epsilon }{b(\Phi^\epsilon+S^0, P^\epsilon+p^0)}\right)\right\Vert_{L^\infty}\| \F^\epsilon\| _{s-1}
  \nonumber\\
   &\ \ \ \ +\| \F^\epsilon\| _{L^\infty}\left\Vert\frac{\U^\epsilon\times\G^\epsilon }{b(\Phi^\epsilon+S^0, P^\epsilon+p^0)}\right\Vert_{s}\bigg]\| \partial^\alpha_x\Phi^\epsilon \| \nonumber\\
   &\leq  \gamma_2 \| \F^\epsilon\| ^2_{s-1}+C_{\gamma_2} (\| \mathcal{E}^\epsilon(t)\| ^2_{s}+1) \| \partial^\alpha_x
   \Phi^\epsilon\| ^2.
 \end{align}
for any $\gamma_2>0$. For the term  $\mathcal{I}^{(3_3)}$, by Cauchy-Schwarz's and the Moser-type inequalities, it can be bounded by
\begin{align}\label{ht133}
\big|\mathcal{I}^{(3_3)} \big|  \leq & C(1+\|\mathcal{E}(t)\|^{2(s+1)}_s)
\|(\partial^\alpha_x \Phi^\epsilon,  \partial^\alpha_xP^\epsilon, \partial^\alpha_x\U^\epsilon, \partial^\alpha_x\G^\epsilon)\|^2.
\end{align}
 By  the regularity of $  (S^0,\u^0, \H^0)$,   the positivity of $b(\Phi^\epsilon+S^0, P^\epsilon+p^0)$, and Cauchy-Schwarz's inequality, the
 terms $\mathcal{I}^{(4)}$ and  $\mathcal{I}^{(7)}$   can be bounded as follows:
\begin{align}\label{ht14}
  \big| \mathcal{I}^{(4)} \big|+\big|\mathcal{I}^{(7)} \big| \leq  C(\|\mathcal{E}^\epsilon(t)\|_{s}^{2s} +1)
   \|(\partial^\alpha_x \Phi^\epsilon,  \partial^\alpha_xP^\epsilon, \partial^\alpha_x\U^\epsilon, \partial^\alpha_x\G^\epsilon)\|^2.
\end{align}
The term $ \mathcal{I}^{(5)}$ can be bounded, similarly to $\mathcal{I}^{(3_2)}$, by
\begin{align}\label{ht15}
\big|  \mathcal{I}^{(5)}\big|\leq   \gamma_3 \| \F^\epsilon\| ^2_{s-1}+C_{\gamma_3} (\| \mathcal{E}^\epsilon(t)\| ^2_{s}+1)
   \|(\partial^\alpha_x \Phi^\epsilon,  \partial^\alpha_xP^\epsilon, \partial^\alpha_x\U^\epsilon, \partial^\alpha_x\G^\epsilon)\|^2
\end{align}
for any $\gamma_3>0$. Finally, similar to $\mathcal{I}^{(3_3)}$, the term $ \mathcal{I}^{(6)}$ can be controlled by
\begin{align}\label{ht16}
 \big| \mathcal{I}^{(6)} \big|  \leq & C(1+\|\mathcal{E}(t)\|^{2(s+1)}_s)
 \|(\partial^\alpha_x \Phi^\epsilon,  \partial^\alpha_xP^\epsilon, \partial^\alpha_x\U^\epsilon, \partial^\alpha_x\G^\epsilon)\|^2.
\end{align}

Substituting  \eqref{zn2}--\eqref{ht16} into \eqref{zn1}, we conclude that
 \begin{align}\label{HT202a}
\!\!\!  \frac12\frac{\dif}{\dif t} \langle \partial^\alpha_x\Phi^\epsilon,
\partial^\alpha_x\Phi^\epsilon \rangle
\leq   &
    {C}_{{\gamma}}(1+\|\mathcal{E}(t)\|^{2(s+1)}_s)
 \|(\partial^\alpha_x \Phi^\epsilon,  \partial^\alpha_xP^\epsilon, \partial^\alpha_x\U^\epsilon, \partial^\alpha_x\G^\epsilon)\|^2
 \nonumber\\
  & +\gamma_1 \| \F^\epsilon\| ^4_{s} +\left(\gamma_2+\gamma_3\right)\| \F^\epsilon\| ^2_{s-1} +C\|\mathcal{E}(t)\|^{2s}_s+
  C\|\mathcal{E}(t)\|^{2}_s.
\end{align}
for some constant $ {C}_{{\gamma}}>0$ depending on $\gamma_j$ ($j=1,2,3$).

Applying operator
$\partial^\alpha_x\, (1\leq|\alpha|\leq s)$ to \eqref{error3} and \eqref{error4}, multiplying  the resulting equations
    by ${ }\partial^\alpha_x\F^\epsilon$ and $\partial^\alpha_x\G^\epsilon$ respectively, and then integrating    over
    ${\mathbb T}^3$, we  obtain that
\begin{align}
  &  \frac12\frac{\dif}{\dif t}(\epsilon\|\partial^\alpha_x\F^\epsilon\| ^2
  +\| \partial^\alpha_x\G^\epsilon\| ^2) + \| \partial^\alpha_x\F^\epsilon\| ^2\nonumber\\
 & +\int (\cu \partial^\alpha_x\F^\epsilon\cdot \partial^\alpha_x \G^\epsilon-\cu \partial^\alpha_x\G^\epsilon\cdot \partial^\alpha_x\F^\epsilon)\dif x\nonumber\\
 =  &
      \left\langle [\partial^\alpha_x(\U^\epsilon\times \H^0)+\partial^\alpha_x(\u^0\times \G^\epsilon)]
     -\partial^\alpha_x(\U^\epsilon\times \G^\epsilon),
    \partial^\alpha_x\F^\epsilon\right\rangle\nonumber\\
 &\ -  \left\langle{\epsilon}\partial^\alpha_x\partial_t
 \cu \H^0+\epsilon\partial^\alpha_x\partial_t(\u^0\times \H^0),
 \partial^\alpha_x\F^\epsilon\right\rangle. \label{L2MMa}
\end{align}
In view of the regularity of $ (\u^0,\H^0)$, Cauchy-Schwarz's and the
Moser-type inequalities, and Sobolev's imbedding, the
terms on the right-hand side of \eqref{L2MMa} can be controlled by
\begin{align}\label{L2M1a}
 \frac{1}{4} \|\partial^\alpha_x\F^\epsilon\|^2
 +C(\|\partial^\alpha_x\mathcal{E}^\epsilon(t)\|_{s}^2+1)\|(\partial^\alpha_x\U^\epsilon,
 \partial^\alpha_x\G^\epsilon)\|^2+C\epsilon^2.
\end{align}
 Noticing the fact that
\begin{align*}
   \int (\cu \partial^\alpha_x\F^\epsilon\cdot \partial^\alpha_x\G^\epsilon
   -\cu \partial^\alpha_x\G^\epsilon\cdot \partial^\alpha_x\F^\epsilon)\dif x
   =\int \dv(\partial^\alpha_x\F^\epsilon\times \partial^\alpha_x\G^\epsilon)\dif x =0,
\end{align*}
we find that
\begin{align}
 &\frac12  \frac{\dif}{\dif t}(\sqrt{\epsilon }\,\partial^\alpha_x\F^\epsilon\| ^2+\| \partial^\alpha_x\G^\epsilon\| ^2)
   +\frac{3  }{4}\| \partial^\alpha_x\F^\epsilon\| ^2  \nonumber\\
  &\qquad  \leq
  C(\| \mathcal{E}^\epsilon(t)\| _{s}^2+1)\| (\partial^\alpha_x\U^\epsilon,\partial^\alpha_x\G^\epsilon)\| ^2+C\epsilon^2. \label{L2M2ba}
\end{align}
We should point out that here we have used the special structure of \eqref{error3} and \eqref{error4}.

Combining \eqref{HT202a} with \eqref{L2M2ba}, one obtains the estimate \eqref{H2a}.
\end{proof}


\medskip
\subsection{Higher order estimates on $P^\epsilon$ and  $\U^\epsilon $}

In order to close the estimates \eqref{L2} and \eqref{H2a}, now we study the higher order derivatives of $P^\epsilon$ and $\U^\epsilon$.
We shall adapt the techniques developed in \cite{MS01,JJL4}. Set
\begin{align*}
  \mathcal{A}(\Phi^\epsilon, P^\epsilon)& =\left(\begin{array}{cc}
                   a(\Phi^\epsilon+S^0, P^\epsilon+p^0) & 0 \\
                    0 & r(\Phi^\epsilon+S^0, P^\epsilon+p^0)\mathbf{I}_{3}
                   \end{array}\right),\nonumber\\
   \mathcal{L}(\partial_x)& =\left(\begin{array}{cc}
                    0 & \dv  \\
                    \nabla & 0 \end{array}\right), \qquad
    \mathcal{U}^\epsilon=\left(\begin{array}{c}
                   P^\epsilon\\
                   \U^\epsilon
                    \end{array}
                    \right).
                    \end{align*}

 Let $\mathcal{L}_{\mathcal{A}}(\partial_x ):=  \{ \mathcal{A}(\Phi^\epsilon, P^\epsilon)\} ^{-1}\mathcal{L}(\partial_x)$. We have

\begin{lem}\label{LHB}
  There are constants $K>0$ and $C_1>0$ such that  for all $\sigma\in\{1,\dots,s\}$,
$\epsilon \in (0,1]$ and $t\in [0,T]$,
\begin{align}
 & \|\mathcal{U}^\epsilon\|_\sigma \leq K\big\{\|\mathcal{L}(\partial_x) {\mathcal{U}^\epsilon}\|_{\sigma -1}
+ \|\cu \U^\epsilon  \|_{\sigma-1}+\|\mathcal{U}^\epsilon\|_{\sigma-1}\big\},\label{pua}\\
&\|\mathcal{U}^\epsilon\|_\sigma \leq C_1\big\{\|\{\mathcal{L}_{\mathcal{A}}(\partial_x)\}^\sigma  \mathcal{U}^\epsilon\|_{0}
+\|\cu \U^\epsilon\|_{\sigma-1}+\|\mathcal{U}^\epsilon\|_{\sigma-1}\big\}.\label{pub}
\end{align}
\end{lem}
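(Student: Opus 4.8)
The plan is to establish \eqref{pua} by a direct Fourier computation on the torus and then to bootstrap it to \eqref{pub} by an induction on $\sigma$ that exploits the fact that applying $\mathcal{L}_{\mathcal{A}}(\partial_x)$ sends the velocity block of $\mathcal{U}^\epsilon$ onto a scalar multiple of a gradient, whose curl is therefore of lower order.

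For \eqref{pua}, I would write $\mathcal{U}^\epsilon=(P^\epsilon,\U^\epsilon)$ and expand in Fourier series on $\mathbb{T}^3$. For each frequency $\xi\neq 0$ one has the orthogonal split $|\xi|^2|\hat{\U}^\epsilon(\xi)|^2=|\xi\cdot\hat{\U}^\epsilon(\xi)|^2+|\xi\times\hat{\U}^\epsilon(\xi)|^2$; multiplying by $(1+|\xi|^2)^{\sigma-1}$ and summing over $\xi$ (the zero mode contributing no derivative) gives the exact identity $\|\U^\epsilon\|_\sigma^2=\|\dv\U^\epsilon\|_{\sigma-1}^2+\|\cu\U^\epsilon\|_{\sigma-1}^2+\|\U^\epsilon\|_{\sigma-1}^2$, and likewise $\|P^\epsilon\|_\sigma^2=\|\nabla P^\epsilon\|_{\sigma-1}^2+\|P^\epsilon\|_{\sigma-1}^2$ for the scalar. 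Since $\mathcal{L}(\partial_x)\mathcal{U}^\epsilon=(\dv\U^\epsilon,\nabla P^\epsilon)$, adding the two identities yields $\|\mathcal{U}^\epsilon\|_\sigma^2=\|\mathcal{L}(\partial_x)\mathcal{U}^\epsilon\|_{\sigma-1}^2+\|\cu\U^\epsilon\|_{\sigma-1}^2+\|\mathcal{U}^\epsilon\|_{\sigma-1}^2$, which is \eqref{pua} (the constant $K$ absorbing subadditivity of the square root). I stress that this uses nothing about $\mathcal{U}^\epsilon$ beyond its block structure, so \eqref{pua} holds verbatim for \emph{any} pair consisting of a scalar and a vector field; this generality is exactly what lets the induction below close.

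For \eqref{pub} I would induct on $\sigma$, applying the inductive hypothesis (the general, structural form of \eqref{pub} at order $\sigma-1$) to the shifted object $\mathcal{W}:=\mathcal{L}_{\mathcal{A}}(\partial_x)\mathcal{U}^\epsilon$, whose scalar part is $a^{-1}\dv\U^\epsilon$ and whose vector part is $\mathcal{W}_v:=r^{-1}\nabla P^\epsilon$. The base case $\sigma=1$ is immediate from \eqref{pua} and $\mathcal{L}(\partial_x)=\mathcal{A}\,\mathcal{L}_{\mathcal{A}}(\partial_x)$, using that $\mathcal{A}$ and $\mathcal{A}^{-1}$ are bounded uniformly in $\epsilon$ and $t\in[0,T]$ thanks to the smoothness and positivity of $a,r$ together with the bounds $\|P^\epsilon\|_s,\|\Phi^\epsilon\|_s\le K\delta$ of Proposition \ref{Pb} and the regularity of $(p^0,S^0)$ from Proposition \ref{Pa}. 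For the inductive step I start from \eqref{pua}, replace $\mathcal{L}(\partial_x)\mathcal{U}^\epsilon=\mathcal{A}\mathcal{W}$ and invoke the Moser inequality \eqref{ma} to get $\|\mathcal{L}(\partial_x)\mathcal{U}^\epsilon\|_{\sigma-1}\le C\|\mathcal{W}\|_{\sigma-1}$; then I apply the hypothesis at order $\sigma-1$ to $\mathcal{W}$, which produces $\|\{\mathcal{L}_{\mathcal{A}}(\partial_x)\}^{\sigma-1}\mathcal{W}\|_0=\|\{\mathcal{L}_{\mathcal{A}}(\partial_x)\}^{\sigma}\mathcal{U}^\epsilon\|_0$ together with the remainders $\|\cu\mathcal{W}_v\|_{\sigma-2}$ and $\|\mathcal{W}\|_{\sigma-2}$.

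The \emph{crucial} structural point, and the only place where the argument is not pure bookkeeping, is that $\cu\mathcal{W}_v=\cu(r^{-1}\nabla P^\epsilon)=\nabla(r^{-1})\times\nabla P^\epsilon$, since $\cu\nabla=0$; this eliminates one derivative, so that by \eqref{ma} one has $\|\cu\mathcal{W}_v\|_{\sigma-2}\le C\|P^\epsilon\|_{\sigma-1}\le C\|\mathcal{U}^\epsilon\|_{\sigma-1}$, and similarly $\|\mathcal{W}\|_{\sigma-2}\le C\|\mathcal{U}^\epsilon\|_{\sigma-1}$ because $\mathcal{L}_{\mathcal{A}}(\partial_x)$ is first order with coefficients bounded in $H^s$. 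Substituting these bounds back into \eqref{pua} and absorbing constants into $C_1$ yields \eqref{pub}. The main obstacle I anticipate is therefore not any individual estimate but the honesty of the induction: at every stage the freshly created curl and the commutators with the variable coefficients $a,r,a^{-1},r^{-1}$ must be verified to be genuinely of order $\le\sigma-1$, so that they are absorbed into the lower-order remainder and never compete with the top-order quantity $\|\{\mathcal{L}_{\mathcal{A}}(\partial_x)\}^{\sigma}\mathcal{U}^\epsilon\|_0$; this is precisely where the identity $\cu\nabla=0$ and the uniform-in-$\epsilon$ control of $a,r$ and their inverses from Proposition \ref{Pb} are indispensable.
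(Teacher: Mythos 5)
Your proof is correct and follows essentially the same route as the paper, which simply declares \eqref{pua} ``obvious'' (it is the standard div--curl decomposition you verify by Fourier series) and proves \eqref{pub} ``easily by induction on $\sigma$'' using the bound $\|\mathcal{L}(\partial_x)\mathcal{U}^\epsilon\|\leq\|\mathcal{A}\|_{L^\infty}\|\mathcal{L}_{\mathcal{A}}(\partial_x)\mathcal{U}^\epsilon\|$ and the uniform positivity of $a,r$. Your write-up merely makes explicit the details the paper leaves to the reader, in particular the key structural point that $\cu\bigl(r^{-1}\nabla P^\epsilon\bigr)=\nabla(r^{-1})\times\nabla P^\epsilon$ is of lower order, and the observation that the inductive hypothesis must be formulated for arbitrary scalar--vector pairs so that it can be applied to $\mathcal{L}_{\mathcal{A}}(\partial_x)\mathcal{U}^\epsilon$.
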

\begin{proof} \eqref{pua} is obvious. Recalling the fact that
$ a(\Phi^\epsilon+S^0, P^\epsilon+p^0)$ and $ r(\Phi^\epsilon+S^0, P^\epsilon+p^0)$ are smooth, positive, and bounded away from zero
with respect to each $\epsilon$, and applying the inequality
$$ \|\mathcal{L}(\partial_x) \mathcal{U}^\epsilon\|\leq \|\mathcal{A}\|_{L^\infty}\|\mathcal{L}_{\mathcal{A}}(\partial_x )\mathcal{U}^\epsilon\|,$$
we see that \eqref{pub} can be shown easily by induction on $\sigma$.
\end{proof}

Next, we  bound $\|\{\mathcal{L}_{\mathcal{A}}(\partial_x)\}^\sigma  {\mathcal{U}}^\epsilon\|_{0}$ and
$\|\cu \U^\epsilon\|_{\sigma-1}$ by induction. We first show the following estimate.

\begin{lem}\label{LHC}
  There exist constants $\bar s>0$ and $\kappa_1>0$, such that for all $1\leq \sigma \leq s$, all
$\epsilon \in (0,1]$ and $t\in [0,T]$, it holds that
\begin{align}
 &\int  \big(\mathcal{A}(\Phi^\epsilon, P^\epsilon)|\{\mathcal{L}_{\mathcal{A}}(\partial_x)\}^\sigma  {\mathcal{U}}^\epsilon|^2\big)(t)\,\textrm{\emph{d}}x
\leq \int  \big(\mathcal{A}(\Phi^\epsilon, P^\epsilon)|\{\mathcal{L}_{\mathcal{A}}(\partial_x)\}^\sigma  {\mathcal{U}}^\epsilon|^2\big)(0)\,\textrm{\emph{d}}x
\nonumber\\
&\quad
+C\epsilon^2  + \int^t_0 \kappa_1 \|\F^\epsilon(\tau)\|^4_s\dif \tau
+C_{\kappa_1}\int^t_0(1+\|\mathcal{E}(\tau)\|^{2\bar s}_s)\|\{\mathcal{L}_{\mathcal{A}}(\partial_x)\}^\sigma \mathcal{U}^\epsilon(\tau)\|^2\dif \tau. \label{puc}
\end{align}
\end{lem}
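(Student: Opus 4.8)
The plan is to apply the operator $\{\mathcal{L}_{\mathcal{A}}(\partial_x)\}^\sigma$ to the equation satisfied by $\mathcal{U}^\epsilon=(P^\epsilon,\U^\epsilon)^{\mathrm{T}}$ and then perform an energy estimate weighted by the symmetrizer $\mathcal{A}(\Phi^\epsilon,P^\epsilon)$. First I would write the $P^\epsilon$--$\U^\epsilon$ subsystem \eqref{error1}--\eqref{error2} in the compact form $\mathcal{A}(\Phi^\epsilon,P^\epsilon)\partial_t\mathcal{U}^\epsilon+\mathcal{L}(\partial_x)\mathcal{U}^\epsilon=\mathcal{R}^\epsilon$, where $\mathcal{R}^\epsilon$ collects the convection terms $-\mathcal{A}(\Phi^\epsilon,P^\epsilon)(\U^\epsilon+\u^0)\cdot\nabla\mathcal{U}^\epsilon$ together with the source terms $(f_1^\epsilon,\mathbf{f}_2^\epsilon)^{\mathrm{T}}$. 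Dividing by $\mathcal{A}$ gives $\partial_t\mathcal{U}^\epsilon+\mathcal{L}_{\mathcal{A}}(\partial_x)\mathcal{U}^\epsilon=\mathcal{A}^{-1}\mathcal{R}^\epsilon$. The key structural point is that $\mathcal{L}_{\mathcal{A}}(\partial_x)$ commutes with $\partial_t$ up to commutators involving $\partial_t\mathcal{A}^{-1}$, so applying $\{\mathcal{L}_{\mathcal{A}}(\partial_x)\}^\sigma$ yields
\begin{align*}
\partial_t\big(\{\mathcal{L}_{\mathcal{A}}(\partial_x)\}^\sigma\mathcal{U}^\epsilon\big)
+\mathcal{L}_{\mathcal{A}}(\partial_x)\big(\{\mathcal{L}_{\mathcal{A}}(\partial_x)\}^\sigma\mathcal{U}^\epsilon\big)
=\{\mathcal{L}_{\mathcal{A}}(\partial_x)\}^\sigma(\mathcal{A}^{-1}\mathcal{R}^\epsilon)+\mathcal{C}^\epsilon,
\end{align*}
where $\mathcal{C}^\epsilon$ is the commutator $[\partial_t,\{\mathcal{L}_{\mathcal{A}}(\partial_x)\}^\sigma]\mathcal{U}^\epsilon$.

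Next I would multiply by $\mathcal{A}(\Phi^\epsilon,P^\epsilon)\{\mathcal{L}_{\mathcal{A}}(\partial_x)\}^\sigma\mathcal{U}^\epsilon$ and integrate over $\mathbb{T}^3$. The crucial cancellation is that $\mathcal{A}\,\mathcal{L}_{\mathcal{A}}(\partial_x)=\mathcal{L}(\partial_x)$ is skew-adjoint on $L^2(\mathbb{T}^3)$ (since $\nabla$ and $\dv$ are formal adjoints up to sign on the torus), so the second term on the left contributes nothing to the energy. This produces
\begin{align*}
\frac12\frac{\dif}{\dif t}\!\int\!\big(\mathcal{A}|\{\mathcal{L}_{\mathcal{A}}(\partial_x)\}^\sigma\mathcal{U}^\epsilon|^2\big)\dif x
=\frac12\!\int\!\big((\partial_t\mathcal{A})|\{\mathcal{L}_{\mathcal{A}}(\partial_x)\}^\sigma\mathcal{U}^\epsilon|^2\big)\dif x
+\big\langle\mathcal{A}\{\mathcal{L}_{\mathcal{A}}(\partial_x)\}^\sigma(\mathcal{A}^{-1}\mathcal{R}^\epsilon)+\mathcal{A}\mathcal{C}^\epsilon,\{\mathcal{L}_{\mathcal{A}}(\partial_x)\}^\sigma\mathcal{U}^\epsilon\big\rangle.
\end{align*}
After integrating in time, the left side gives the two boundary terms in \eqref{puc}. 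The $\partial_t\mathcal{A}$ term is harmless: by the bound \eqref{wb} on $\partial_t a,\partial_t r$ it is controlled by $(1+\|\mathcal{E}^\epsilon\|_s^{2\bar s})\|\{\mathcal{L}_{\mathcal{A}}(\partial_x)\}^\sigma\mathcal{U}^\epsilon\|^2$, and likewise the commutator $\mathcal{C}^\epsilon$ reduces, via the Moser estimates \eqref{ma}--\eqref{mb} and \eqref{mo}--\eqref{ho} and the uniform bounds from Proposition~\ref{Pb}, to terms of the same type absorbed into the last integral of \eqref{puc}.

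The main obstacle will be the source term $\langle\mathcal{A}\{\mathcal{L}_{\mathcal{A}}(\partial_x)\}^\sigma(\mathcal{A}^{-1}\mathcal{R}^\epsilon),\{\mathcal{L}_{\mathcal{A}}(\partial_x)\}^\sigma\mathcal{U}^\epsilon\rangle$, in particular the contributions coming from $\mathbf{f}_2^\epsilon$ that are quadratic in the electric-field error $\F^\epsilon$, such as $[\F^\epsilon+\u^0\times\G^\epsilon+\U^\epsilon\times\H^0]\times\H^0$ and the purely nonlinear piece $(\U^\epsilon\times\G^\epsilon)\times(\G^\epsilon+\H^0)$. These cannot be bounded by $\|\mathcal{U}^\epsilon\|$ alone, so I expect to invoke a Young-type inequality to split off a term $\kappa_1\|\F^\epsilon\|_s^4$ (matching the right side of \eqref{puc}) while keeping the remaining factors as $(1+\|\mathcal{E}^\epsilon\|_s^{2\bar s})\|\{\mathcal{L}_{\mathcal{A}}(\partial_x)\}^\sigma\mathcal{U}^\epsilon\|^2$; here I would use \eqref{pub} to re-express stray norms $\|\cu\U^\epsilon\|_{\sigma-1}$ and $\|\mathcal{U}^\epsilon\|_{\sigma-1}$ in terms of the weighted quantity, closing the loop. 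The $\epsilon$-order terms $-\epsilon\partial_t\cu\H^0+\epsilon\partial_t(\u^0\times\H^0)$ entering through $\mathbf{f}_4^\epsilon$ do not appear directly here but motivate the $C\epsilon^2$ on the right; the genuine $\epsilon^2$ in \eqref{puc} comes from the linear-in-$\mathcal{E}^\epsilon$ source pieces of $f_1^\epsilon,\mathbf{f}_2^\epsilon$ that carry an explicit factor measuring the distance of the data to the target, bounded via \eqref{ivda}. Carefully tracking which nonlinearity produces the $\|\F^\epsilon\|_s^4$ term versus the $\epsilon^2$ term, and ensuring the power $2\bar s$ is uniform in $\sigma$, is the delicate bookkeeping I would prioritize.
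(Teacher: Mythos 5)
Your overall skeleton (symmetrize with $\mathcal{A}$, use skew-adjointness of $\mathcal{L}(\partial_x)$, control $\partial_t\mathcal{A}$ via \eqref{wb}, split off $\kappa_1\|\F^\epsilon\|_s^4$ by Young's inequality) matches the paper's, but there is a genuine gap in how you treat the convection term, and it is fatal at top order. You place $-\mathcal{A}(\Phi^\epsilon,P^\epsilon)(\U^\epsilon+\u^0)\cdot\nabla\mathcal{U}^\epsilon$ inside the source $\mathcal{R}^\epsilon$ and then apply $\{\mathcal{L}_{\mathcal{A}}(\partial_x)\}^\sigma$ to $\mathcal{A}^{-1}\mathcal{R}^\epsilon$. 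For $\sigma=s$ the resulting term $\{\mathcal{L}_{\mathcal{A}}(\partial_x)\}^{s}\big((\U^\epsilon+\u^0)\cdot\nabla\mathcal{U}^\epsilon\big)$ carries $s+1$ spatial derivatives of $\mathcal{U}^\epsilon$; estimating it in $L^2$ and pairing with $\{\mathcal{L}_{\mathcal{A}}(\partial_x)\}^{s}\mathcal{U}^\epsilon$ by Cauchy--Schwarz, as your plan prescribes, requires $\|\mathcal{U}^\epsilon\|_{s+1}$, which is controlled neither by $\|\mathcal{E}^\epsilon\|_s$ nor by the weighted quantity in \eqref{puc}. This loss of one derivative cannot be repaired by \eqref{pub}, which runs in the opposite direction (it bounds $\|\mathcal{U}^\epsilon\|_\sigma$ by the weighted quantity plus lower-order terms, not the reverse). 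The only commutator you track, $[\partial_t,\{\mathcal{L}_{\mathcal{A}}(\partial_x)\}^\sigma]$, involves $\partial_t\mathcal{A}^{-1}$ and is indeed harmless, but it does nothing to address the convection term, which is precisely where the difficulty lies.

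The paper avoids this by keeping the full convective derivative on the left, as in \eqref{pud}, and commuting $\{\mathcal{L}_{\mathcal{A}}(\partial_x)\}^\sigma$ with the whole operator $\partial_t+(\U^\epsilon+\u^0)\cdot\nabla$, yielding \eqref{pue} with the commutator $g^\epsilon_\sigma=[\partial_t+(\U^\epsilon+\u^0)\cdot\nabla,\{\mathcal{L}_{\mathcal{A}}\}^\sigma]\mathcal{U}^\epsilon$. In this commutator the top-order derivatives cancel, so $g^\epsilon_\sigma$ is of order $\sigma$ in $\mathcal{U}^\epsilon$ and can be estimated in $L^2$ as in \eqref{pug} (this is the content of Lemma 2.4 of M\'etivier--Schochet, which the paper invokes); meanwhile the leading transport term $\langle\mathcal{A}(\U^\epsilon+\u^0)\cdot\nabla\,\mathcal{U}^\epsilon_\sigma,\mathcal{U}^\epsilon_\sigma\rangle$ is handled by integration by parts, producing $-\tfrac12\langle\dv\big(\mathcal{A}(\U^\epsilon+\u^0)\big)\mathcal{U}^\epsilon_\sigma,\mathcal{U}^\epsilon_\sigma\rangle$, which is controlled via \eqref{wc}. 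If you insert this decomposition (extract $(\U^\epsilon+\u^0)\cdot\nabla\{\mathcal{L}_{\mathcal{A}}\}^\sigma\mathcal{U}^\epsilon$, integrate by parts, and estimate the genuine commutator) your argument becomes essentially the paper's; without it the estimate \eqref{puc} cannot be closed. Two minor further inaccuracies: the constant $C\epsilon^2$ in \eqref{puc} cannot be traced to \eqref{ivda}, since \eqref{ivda} is a hypothesis on the initial data only and says nothing about the source terms at positive times; and the source bound \eqref{puh} rests on the observation, absent from your proposal, that $f^\epsilon_1$ and $\mathbf{f}^\epsilon_2$ contain no spatial derivatives of the error functions, which is what makes $\mathbf{h}^\epsilon_\sigma=\{\mathcal{L}_{\mathcal{A}}\}^\sigma(\mathcal{A}^{-1}\mathcal{J}^\epsilon)$ estimable in $L^2$ by the Moser-type inequality \eqref{ma}.
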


\begin{proof} We  follow the arguments in \cite{MS01,JJL4} with modifications. For simplicity, we set
 $\mathcal{A}:= \mathcal{A}(\Phi^\epsilon, P^\epsilon)$.
 Let $\mathcal{U}_\sigma^\epsilon:=\{\mathcal{L}_{\mathcal{A}}(\partial_x)\}^\sigma \mathcal{U}^\epsilon$,
$\sigma\in \{1,\dots, s\}.$
It is easy to verify that the operator $\mathcal{L}_{\mathcal{A}}(\partial_x )$ is
bounded from $H^{\sigma}$ to $H^{\sigma-1}$ for $\sigma \in \{1,\dots, s\}$. Note that
the equations \eqref{error1} and \eqref{error2} can be written as
\begin{align}\label{pud}
  (\partial_t+(\U^\epsilon+\u^0)\cdot \nabla)\mathcal{U}^\epsilon+\frac{1}{\epsilon}\mathcal{A}^{-1}\mathcal{L}(\partial_x)\mathcal{U}^\epsilon
  =  \mathcal{A}^{-1}\mathcal{J}^\epsilon, \quad
 {\mathcal{J}}^\epsilon=\left(\begin{array}{c}
                 f^\epsilon_1 \\
                \mathbf{f}^\epsilon_2
                 \end{array}
                 \right).
\end{align}
 For $ \sigma \geq 1$, we commute the operator $\{\mathcal{L}_{\mathcal{A}}\}^\sigma$ with  \eqref{pud} and
 multiply the resulting system by $\mathcal{A}$ to infer that
\begin{align}\label{pue}
  \mathcal{A}(\partial_t+(\U^\epsilon+\u^0)\cdot \nabla)\mathcal{U}^\epsilon_\sigma
  +\frac{1}{\epsilon}\mathcal{L}(\partial_x)\mathcal{U}^\epsilon_\sigma=\mathcal{A}({g}^\epsilon_\sigma+
  \mathbf{h}^\epsilon_\sigma),
\end{align}
where
$$ {g}^\epsilon_\sigma:= [\partial_t+(\U^\epsilon+\u^0)\cdot \nabla , \{\mathcal{L}_{\mathcal{A}}\}^\sigma] \mathcal{U}^\epsilon,\qquad
\mathbf{h}^\epsilon_\sigma:= \{\mathcal{L}_{\mathcal{A}}\}^\sigma (\mathcal{A}^{-1} \mathcal{J}^\epsilon). $$
Multiplying \eqref{pue} by $ \mathcal{U}^\epsilon_\sigma$ and integrating
over $(0,t)\times\mathbb{T}^3$ with $t\leq T$, and noticing that
 the singular terms cancel out since $\mathcal{L}(\partial_x)$ is skew-adjoint, we then use
the inequality \eqref{wb}  and Cauchy-Schwarz's
inequality to deduce that
\begin{align}\label{puf}
  \frac{1}{2}\langle \mathcal{A}(t)\mathcal{U}^\epsilon_\sigma(t), \mathcal{U}^\epsilon_\sigma(t)\rangle
\leq & \frac{1}{2}\langle \mathcal{A}(0)\mathcal{U}^\epsilon_\sigma(0),
\mathcal{U}^\epsilon_\sigma(0)\rangle
         +C\int^t_0(1+\|\mathcal{E}\|_s^4(\tau)) \|\mathcal{U}^\epsilon_\sigma(\tau)\|^2 \textrm{d}\tau \nonumber\\
  & +C\epsilon^2 +  \int^t_0 (\|{g}^\epsilon_\sigma(\tau)\|^2_\sigma+\|\mathbf{h}^\epsilon_\sigma(\tau)\|^2_\sigma)  \|\mathcal{U}^\epsilon_\sigma(\tau)\|^2 \dif \tau.
\end{align}
Following the proof process of Lemma 2.4 in \cite{MS01} and applying \eqref{wc}, we obtain that
\begin{align}\label{pug}
 \|{g}^\epsilon_\sigma(t)\|\leq C(1+\|\mathcal{E}(t)\|^{2s_1}_s)
\end{align}
for some constant $s_1>0$.

Now we estimate the   term   $\mathbf{h}_\sigma^\epsilon$.
Noticing that  there is no spatial derivatives of
$( P^\epsilon,\U^\epsilon,\Phi^\epsilon,\F^\epsilon,\G^\epsilon)$ in the definitions of $f^\epsilon_1$ and $\mathbf{f}^\epsilon_2$,
thus  we can apply the regularity of $(p^0,\u^0,S^0,\H^0)$, \eqref{ma} and \eqref{wc} to obtain that
\begin{align}\label{puh}
 \|\mathbf{h}^\epsilon_\sigma(t)\|\leq \kappa_1\|\F^\epsilon(t)\|^2_s+C_{\kappa_1}(1+\|\mathcal{E}(t)\|^{2s_1}_s)
\end{align}
for some constant $s_2>0$ and sufficient small $\kappa_1>0$.

Putting   \eqref{pug} and \eqref{puh} into \eqref{puf} and choosing $\bar s=\max\{2,s_1,s_2\}$, we get \eqref{puc}.
\end{proof}

Finally, we derive an estimate for
$\|\cu\U^\epsilon\|_{\sigma-1}$.
Dividing \eqref{error2} by $ r(\Phi^\epsilon+S^0, P^\epsilon+p^0)$ and applying the operator \emph{curl} to the resulting equations, we obtain that
\begin{align}
& [\partial_t +((\U^\epsilon+\u^0)\cdot \nabla)](\cu\U^\epsilon)=\, [(\U^\epsilon+\u^0)\cdot \nabla, \cu]\U^\epsilon  \nonumber\\
 &\qquad
 -\cu  \left(\frac{\nabla \Phi^\epsilon}{r(\Phi^\epsilon+S^0, P^\epsilon+p^0)}\right)
 +\cu \left(\frac{\mathbf{f}^\epsilon_2}{r(\Phi^\epsilon+S^0, P^\epsilon+p^0)}\right).  \label{pui}
 \end{align}
\begin{lem}\label{LHD} There exist constants $\bar{\bar s}>0$ and $\kappa_2>0$, such that the following inequality holds:
\begin{align}  \label{puj}
\|\cu\U^\epsilon(t)\|^2_{s-1} \leq & \|\cu\U^\epsilon(0)\|^2_{s-1}+  C \epsilon^2 + \int^t_0 \kappa_2 \|\F^\epsilon(\tau)\|^4_s\dif \tau\nonumber\\
&
+C_{\kappa_2}\int^t_0(1+\|\mathcal{E}(\tau)\|^{2\bar{\bar s}}_s)\|\cu\U^\epsilon(\tau)\|_{s-1}^2\dif \tau.
 \end{align}
\end{lem}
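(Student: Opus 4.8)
The plan is to run a standard energy estimate on the vorticity equation \eqref{pui}, applying $\partial^\alpha_x$ for $|\alpha|\le s-1$, multiplying by $\partial^\alpha_x(\cu\U^\epsilon)$, integrating over $\mathbb{T}^3$, and summing. The structure closely parallels Lemma \ref{LHa}, so I expect to reuse the same machinery: the convective term is handled by integration by parts, converting $\langle[(\U^\epsilon+\u^0)\cdot\nabla]\partial^\alpha_x(\cu\U^\epsilon),\partial^\alpha_x(\cu\U^\epsilon)\rangle$ into a $-\tfrac12\langle\dv(\U^\epsilon+\u^0)\,\partial^\alpha_x(\cu\U^\epsilon),\partial^\alpha_x(\cu\U^\epsilon)\rangle$ term bounded by $C(1+\|\mathcal{E}^\epsilon\|_s)\|\cu\U^\epsilon\|_{s-1}^2$, while the commutator $[(\U^\epsilon+\u^0)\cdot\nabla,\cu]\U^\epsilon$ and the commutator arising from pulling $\partial^\alpha_x$ past the convection are estimated by the Moser inequality \eqref{mb} together with \eqref{wc}, giving bounds of the form $C(1+\|\mathcal{E}^\epsilon\|_s^{2\bar{\bar s}})\|\cu\U^\epsilon\|_{s-1}^2$ after Cauchy--Schwarz.

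The genuinely new work lies in the forcing terms $-\cu\bigl(\nabla\Phi^\epsilon/r\bigr)$ and $\cu\bigl(\mathbf{f}_2^\epsilon/r\bigr)$. For the first, the plan is to exploit $\cu\nabla=0$: writing $\cu(\nabla\Phi^\epsilon/r)=\nabla(1/r)\times\nabla\Phi^\epsilon$, the leading pressure/entropy gradient cancels and only lower-order products survive, which after \eqref{ma}, \eqref{mo} and \eqref{wc} are controlled by $C(1+\|\mathcal{E}^\epsilon\|_s^{2\bar{\bar s}})\|\mathcal{E}^\epsilon\|_s^2$, i.e. absorbed into the $\|\mathcal{E}^\epsilon\|_s$ terms already present on the right-hand side of the other lemmas. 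For the second, I would inspect $\mathbf{f}_2^\epsilon$ term by term. Most contributions are quadratic in the error quantities times smooth background data and are handled by the Moser calculus exactly as in \eqref{ht14}--\eqref{ht16}. The crucial terms are those containing $\F^\epsilon$, namely the pieces of $[\F^\epsilon+\u^0\times\G^\epsilon+\U^\epsilon\times\H^0]\times\H^0$ and $\times\G^\epsilon$ involving $\F^\epsilon$: after applying $\cu$ and $\partial^\alpha_x$ with $|\alpha|\le s-1$ these require up to $s$ derivatives of $\F^\epsilon$, which cannot be absorbed into $\|\cu\U^\epsilon\|_{s-1}$ and must instead be thrown into the $\kappa_2\|\F^\epsilon\|_s^4$ reservoir via Young's inequality $ab\le \kappa_2 a^2+C_{\kappa_2}b^2$, exactly mirroring the role of $\gamma_1$ in \eqref{ht111} and $\kappa_1$ in \eqref{puc}.

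The main obstacle, and the reason for the quartic term $\kappa_2\|\F^\epsilon\|_s^4$ rather than a quadratic one, is the $\F^\epsilon$-dependence appearing at top order inside $\cu(\mathbf{f}_2^\epsilon/r)$. Because $\F^\epsilon$ enters $\mathbf{f}_2^\epsilon$ multiplied by $\H^0$ or $\G^\epsilon$, taking $\cu$ and $s-1$ derivatives lands $s$ derivatives on $\F^\epsilon$ paired against the curl-vorticity; controlling this without any diffusion forces us to borrow from the damping term $\tfrac32\int_0^t\|\F^\epsilon\|^2\,\dif\tau$ furnished by \eqref{H2a} and \eqref{L2}, which is precisely what makes the $\kappa_2$ (small) coupling essential for the later closure argument. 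Once these $\F^\epsilon$-terms are isolated into the $\kappa_2$ reservoir and everything else is collected as $C_{\kappa_2}(1+\|\mathcal{E}^\epsilon\|_s^{2\bar{\bar s}})\|\cu\U^\epsilon\|_{s-1}^2$ plus the $C\epsilon^2$ coming from the $\epsilon\partial_t$-terms in $\mathbf{f}_4^\epsilon$ that feed back through the system, integrating in time from $0$ to $t$ yields \eqref{puj} directly.
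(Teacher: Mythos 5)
Your proposal matches the paper's proof in all essentials: the same $\partial^\alpha_x$-energy estimate on the vorticity equation \eqref{pui} with commutator bounds for the convection, the same key cancellation $\cu\nabla=0$ rewriting $\cu\bigl(\nabla\Phi^\epsilon/r\bigr)$ as $\nabla(1/r)\times\nabla\Phi^\epsilon$, and the same Young-inequality splitting that sends the $\F^\epsilon$-bearing parts of $\cu\bigl(\mathbf{f}_2^\epsilon/r\bigr)$ into the small quartic reservoir $\kappa_2\|\F^\epsilon\|^4_s$ (exactly the paper's estimates \eqref{pull} and \eqref{pun}), to be absorbed later by the damping. The only slip is attributing the $C\epsilon^2$ to $\mathbf{f}_4^\epsilon$, which never enters the vorticity equation, but this is immaterial to the argument.
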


\begin{proof}
Set $\omega^\epsilon:= \cu\U^\epsilon$.   Taking $\partial^\alpha_x$
$(0\leq|\alpha|\leq s-1)$ to \eqref{pui}, multiplying the resulting equations by $\omega^\epsilon$, and
integrating over $(0,t)\times \mathbb{T}^3$ with $t\leq T$, we infer that
\begin{align}\label{puk}
\frac{1}{2}\langle \partial^\alpha_x \omega^\epsilon(t),   \partial^\alpha_x \omega^\epsilon(t)\rangle
\leq\,&\frac{1}{2}\langle \partial^\alpha_x \omega^\epsilon(0),   \partial^\alpha_x\omega^\epsilon(0)\rangle
+ C\int^t_0(1+\|\mathcal{E}\|_s^4(\tau))\|\partial^\alpha \omega^\epsilon(\tau)\|^2\textrm{d}\tau\nonumber \\
&+\int^t_0\left\langle  [(\U^\epsilon+\u^0)\cdot \nabla, \partial^\alpha_x]\omega^\epsilon ,\partial^\alpha_x \omega^\epsilon\right\rangle\dif\tau  \nonumber\\
 &
 -\int^t_0\left\langle \partial^\alpha_x\cu  \left(\frac{\nabla \Phi^\epsilon}{r(\Phi^\epsilon+S^0, P^\epsilon+p^0)}\right),\partial^\alpha_x \omega^\epsilon\right\rangle \dif\tau \nonumber\\
&+\int^t_0\left\langle\partial^\alpha_x\cu \left(\frac{\mathbf{f}^\epsilon_2}{r(\Phi^\epsilon+S^0, P^\epsilon+p^0)}\right),\partial^\alpha_x \omega^\epsilon\right\rangle\dif\tau\nonumber\\
 :=\,&\frac{1}{2}\langle \partial^\alpha_x \omega^\epsilon(0),   \partial^\alpha_x\omega^\epsilon(0)\rangle
+ C\int^t_0(1+\|\mathcal{E}(\tau)\|_s^4)\|\partial^\alpha_x \omega^\epsilon(\tau)\|^2\textrm{d}\tau\nonumber \\
& +  \int^t_0\sum_{i=1}^3 \mathcal{N}_i(\tau)\dif \tau.
 \end{align}

Next, we estimate the terms $\mathcal{N}_i(\tau)$ ($i=1,2,3$) on the right-hand side of \eqref{puk}.
From Cauchy-Schwarz's inequality we get that
\begin{align*}
 | \mathcal{N}_1(\tau)|\leq   C\|\partial^\alpha_x\omega^\epsilon(\tau)\| \, \|\hat{\mathbf{h}}^\epsilon_\alpha(\tau)\|,
 \qquad  \hat{\mathbf{h}}^\epsilon_\alpha(\tau):= [(\U^\epsilon+\u^0)\cdot \nabla, \partial_x^\alpha]\omega^\epsilon.
\end{align*}
The commutator $\hat{\mathbf{h}}^\alpha_\alpha$ is a sum of terms
$\partial^\beta_x (\U^\epsilon+\u^0)\cdot\partial^\zeta_x \omega^\epsilon$
with multi-indices $\beta$ and $\zeta$ satisfying $|\beta|+|\zeta|\leq s$, $|\beta|>0$,
and $|\zeta|>0$. Thus,
$$\|\hat{\mathbf{h}}^\epsilon_\alpha(\tau)\|\leq C(1+\|\mathcal{E}(\tau)\|^{s}_{s}),$$
 where the the following nonlinear Sobolev inequality has been used (see \cite{Ho97}): For all
$\alpha=(\alpha_1, \alpha_2 , \alpha_3)$, $\sigma\geq 0$, and $f,g\in H^{k+\sigma}(\mathbb{T}^3)$, $|\alpha|=k$,
it holds that
$$ \|[f,\partial^\alpha_x ]g\|_{{\sigma}}\leq  C_0(\|f\|_{W^{1,\infty}}\| g\|_{{\sigma+k-1}}
    +\| f\|_{{\sigma+k}}\|g\|_{L^{\infty}}).
$$
Hence, we have
 \begin{align}\label{pul}
 | \mathcal{N}_1(\tau)|\leq    C(1+\|\mathcal{E}(\tau)\|^{2s}_{s})\|\partial^\alpha_x  \omega^\epsilon(\tau)\|^2.
\end{align}

Noting that basic vector formulas  $\cu(\psi \u)=\psi \cu \u+\nabla \psi \times \u$ and $\cu \nabla \psi =0$, the term $N_2(\tau)$ can be
estimated by
\begin{align}\label{pull}
  | \mathcal{N}_2(\tau)|\leq & \left\|\partial^\alpha_x\left\{\nabla \left(\frac{1}{r(\Phi^\epsilon+S^0, P^\epsilon+p^0)}\right)\times \nabla\Phi^\epsilon\right\}\right\|
  \|\partial^\alpha_x \omega^\epsilon(\tau)\|\nonumber\\
  \leq& C(1+\|\mathcal{E}(\tau)\|^{s_3}_{s})\|\partial^\alpha_x  \omega^\epsilon(\tau)\|
\end{align}
for some $s_3>0$, where the properties $r(\cdot,\cdot)$, Proposition \ref{Pb}, Sobolev's imbedding, \eqref{mo}, and \eqref{wc} have been used.

By   the definition of  $\mathbf{f}^\epsilon_2$,
 the regularity of $(p^0,\u^0,S^0,\H^0)$, Cauchy-Schwarz's
inequality, \eqref{ma},   \eqref{wc}, and \eqref{mo}, we have
\begin{align}\nonumber
 \left\|\partial^\alpha_x\cu \left(\frac{\mathbf{f}^\epsilon_2}{r(\Phi^\epsilon+S^0, P^\epsilon+p^0)}\right)(t)\right\|\leq \tilde\kappa_2\|\F^\epsilon(t)\|^2_s+C_{\tilde\kappa_2}(1+\|\mathcal{E}(t)\|^{2s_4}_s)
\end{align}
for some constant $s_4>0$ and sufficient small $\tilde\kappa_2>0$.
Thus the term $\mathcal{N}_3(\tau)$ can be bounded by
 \begin{align}\label{pun}
  | \mathcal{N}_3(\tau)|\leq  \big( \tilde\kappa_2\|\F^\epsilon(t)\|^2_s+C_{\tilde\kappa_2}(1+\|\mathcal{E}(t)\|^{s_4}_s)\big)\|\partial^\alpha  \omega^\epsilon(\tau)\| .
  \end{align}
Putting \eqref{puk}--\eqref{pun} together, summing up $\alpha$ with
$0\leq|\alpha|\leq s-1$,  applying Cauchy-Schwarz's
inequality, and choosing $\bar{\bar{s}}=\max\{s_3,s_4\}$ and some sufficient small $\kappa_2>0$, we   obtain \eqref{puj}.
\end{proof}

With the estimates in Lemmas   \ref{La}--\ref{LHD} in hand, we are in a position to prove Proposition \ref{P31}.

\begin{proof}[Proof of Proposition \ref{P31}]
As in \cite{JL,JL2,PW}, we introduce an $\epsilon$-weighted energy functional
$$
\Gamma^\epsilon(t) =  \v \mathcal{E}^\epsilon(t)\v ^2_{s}.
$$
Summing up  \eqref{H2a} with $1\leq|\alpha|\leq s$  and \eqref{puc}  with
$1\leq|\sigma|\leq s-1$, combining \eqref{L2} with \eqref{puj}, using \eqref{pub} and
the fact that $ a(\Phi^\epsilon+S^0, P^\epsilon+p^0)$ and $ r(\Phi^\epsilon+S^0, P^\epsilon+p^0)$ are smooth, positive, and bounded away from zero
with respect to each $\epsilon$, $\F^\epsilon \in C^l([0,T],H^{s-2l})\ (  l=0,1)$, and
$$\|\{\mathcal{L}_{\mathcal{A}}(\partial_x)\}^\sigma  {\mathcal{U}}^\epsilon\|_{0}\leq C(1+\|\mathcal{E}(\tau)\|^{s_5}_s)\|\U^\epsilon(\tau)\|_{s}$$
for some $s_5>0$, we can choose $\eta_i \,(i=1,2,3)$,
  $\gamma_i\,(i=1,2,3)$, and $\kappa_1,\kappa_2$ to be sufficiently small to deduce
  that there exist a sufficient large constant $s_0>0$ and
 a small $\epsilon_0> 0$ depending only on $T$, such that for any $\epsilon\in (0,\epsilon_0]$ and $t\in [0,T]$,
\begin{align}\label{gma}
\Gamma^\epsilon(t)\leq C\Gamma^\epsilon(t = 0)+ C\epsilon^2+C\int^t_0\Big\{
\big((1+(\Gamma^\epsilon)^{s_0}\big)\Gamma^\epsilon\Big\}(\tau)\dif \tau .
\end{align}
 Thus, applying Gronwall's lemma to \eqref{gma} with the assumption $\Gamma^\epsilon (t=0)\leq C\epsilon^2$
 and Proposition \ref{P31}, we obtain that there exist a $0<T_1<1$ and an $\epsilon>0$, such that $T^\epsilon\geq T_1$
for all $\epsilon\in (0,\epsilon]$ and
$\Gamma^\epsilon(t)\leq C\epsilon^2$ for all $ t \in [0,T_1]$. Therefore, the desired a priori estimate
\eqref{www} holds. Moreover, by the standard continuous induction method, we can extend
$T^\epsilon\geq T_0$ for any $T_0<T_*$.
\end{proof}

Now we prove Theorem \ref{th} by  applying  Proposition \ref{P31}.

\begin{proof}[Proof of Theorem \ref{th}]  By virtue of the definition of the error functions
$( P^\epsilon, \U^\epsilon,  \Phi^\epsilon,\linebreak \F^\epsilon,  \G^\epsilon)$,
the regularity of $( p^0,\u^0, S^0,\H^0)$, the error system \eqref{error1}--\eqref{error4} and the
primitive system \eqref{nca}--\eqref{nce} are equivalent on $[0,T]$ for some $T>0$.
Therefore  the assumption \eqref{ivda} in Theorem \ref{th} implies the assumption \eqref{ww} in Proposition
\ref{P31}, and hence \eqref{www} gives \eqref{iivda}.
\end{proof}


{\bf Acknowledgements:}
The authors  give their gratitude  to professor Hugo Beir\~{a}o da Veiga  for his valuable  constructive
suggestions  on  preparation this paper.
Jiang was supported by the National Basic Research Program under the Grant 2011CB309705
and NSFC (Grant Nos. 11229101, 11371065).
Li was supported by NSFC (Grant No. 11271184), NCET-11-0227,
PAPD, and the Fundamental Research Funds for the Central Universities.


\bibliographystyle{plain}

\begin{thebibliography}{aaa}

\bibitem{A} T.  Alazard,
Incompressible limit of the nonisentropic Euler equations with the solid wall boundary conditions.
Adv. Differential Equations 10 (2005), no. 1, 19-44.




 \bibitem{B2}
 H. Beir\~{a}o da Veiga, Structural stability and data dependence for fully nonlinear hyperbolic problems,
 Arch. Ration. Mech. Anal. 120 (1992) 51-60.

 \bibitem{B1}
 H. Beir\~{a}o da Veiga, Perturbation theorems for linear hyperbolic mixed problems and applications to the Euler compressible equations,
 Comm. Pure Appl. Math. 46 (1993) 221-259.

\bibitem{B}
 H. Beir\~{a}o da Veiga, A review on some contributions to perturbation theory, singular limits and well-posedness. J. Math. Anal. Appl. 352 (2009),  271-292.

%
%
%


\bibitem{EM}
A.C. Eringen, G. A. Maugin,  Electrodynamics of Continua II: Fluids
and Complex Media, Springer-Verlag, New York, 1990.

%
%
%
%
%
%
%
%



\bibitem{Hu87} W.R. Hu, Cosmic Magnetohydrodynamics, Science Press, Beijing, 1987 (in Chinese).


%
%

\bibitem{Ho97}
L. H\"{o}mander, Lectures on Nonlinear Hyperbolic Differential Equations, Springer-Verlag, Berlin, 1997.

%

%

 \bibitem{Im} I. Imai,
 General principles of magneto-fluid dynamics,
 in ``Magneto-Fulid Dynamics'', Suppl. Prog. Theor. Phys, No.24, 1-34, Kyoto Univerity, 1962.

%

\bibitem{JJL3} S. Jiang, Q.-C. Ju,  F.-C. Li,
Low Mach number limit for the multi-dimensional full magnetohydrodynamic equations,
   Nonlinearity 25 (2012), 1351-1365.
%
%
%
\bibitem{JJL4} S. Jiang, Q.-C. Ju,  F.-C. Li,
Incompressible limit of the non-isentropic ideal magnetohydrodynamic equations,
available at: arXiv:1301.5126v1 [math.AP].
%
\bibitem{JJLX} S. Jiang, Q.-C. Ju and  F.-C. Li, Z.-P. Xin
Low Mach number limit for the full compressible magnetohydrodynamic equations with general initial data,
Adv. Math.   259 (2014), 384--420.

\bibitem{JL} S. Jiang, F.-C. Li, Rigorous derivation of the compressible
magnetohydrodynamic equations from the electromagnetic fluid system, Nonlinearity 25 (2012), 1735-1752.

\bibitem{JL2} S. Jiang, F.-C. Li,
Convergence of the complete electromagnetic fluid system  to the full compressible
magnetohydrodynamic equations,  available at:  arXiv:1309.3668 [math.AP].


\bibitem{K} S. Kawashima,
Smooth global solutions for two-dimensional equations
of electromagnetofluid dynamics, Japan J. Appl. Math. 1 (1984), 207-222.



\bibitem{KS1} S. Kawashima,  Y. Shizuta,
 Magnetohydrodynamic approximation of the complete equations for an electromagnetic fluid,
  Tsukuba J. Math. 10 (1986), 131-149.

\bibitem{KS2} S. Kawashima,  Y. Shizuta,
  Magnetohydrodynamic approximation of the complete equations for an
electromagnetic fluid. II,  Proc. Japan Acad. Ser. A Math. Sci. 62 (1986), 181-184.

\bibitem{KM1}  S. Klainerman, A. Majda,
Singular limits of quasilinear hyperbolic systems with large
parameters and the incompressible limit of compressible fluids,
Comm. Pure Appl. Math. 34 (1981), 481-524.


\bibitem{Mo} V. B. Moseenkov, Composition of functions in Sobolev spaces,
Ukrainian Math. J. 34 (1982), 316-319.

%
%
%
%
%
%
%
%
\bibitem{M84} A. Majda,
 Compressible Fluid Flow and Systems of Conservation Laws in Several Space Variables. Springer-Verlag, New York, 1984.

\bibitem{MS01}
G. M\'{e}tivier, S. Schochet, The incompressible limit of the non-isentropic Euler equations,
Arch. Ration. Mech. Anal. 158 (2001), 61-90.

%

\bibitem{PW} Y.J. Peng,  S. Wang,
Rigorous derivation of incompressible e-MHD equations from compressible Euler-Maxwell equations,
 SIAM J. Math. Anal. 40 (2008), 540-565.
%
%
%
%
%
%
%
%
\bibitem{Rub} B. Rubino,
Singular limits in the data space for the equations of magneto-fluid dynamics.
Hokkaido Math. J. 24 (1995), no. 2, 357-386.



\bibitem{S} P. Secchi,  On the singular incompressible limit of inviscid compressible fluids.
 J. Math. Fluid Mech. 2 (2000), 107-125.


 \bibitem{S22} P. Secchi, On slightly compressible ideal flow in the half-plane. Arch. Ration. Mech. Anal. 161 (2002), no. 3, 231-255.

 \bibitem{Sc} S.  Schochet,
The compressible Euler equations in a bounded domain: existence of solutions and the incompressible limit.
Comm. Math. Phys. 104 (1986), no. 1, 49-75.

 \bibitem{VH}
A. I. Vol'pert, S. I. Hudjaev, On the Cauchy problem for composite
systems of nonlinear differential equations, Math. USSR-Sb. 16 (1972) 517-544.
%



\end{thebibliography}

\end{document}